\def\namedlabel#1#2{\begingroup
    #2%
    \def\@currentlabel{#2}%
    \phantomsection\label{#1}\endgroup
}
\NewDocumentCommand{\mybar}{ O{0.860} O{0.3pt} m }{
    \mathrlap{\hspace{#2}\overline{\scalebox{#1}[1]{\phantom{\ensuremath{#3}}}}}\ensuremath{#3}
}
\newcommand{\cC}{\mathcal{C}}
\newcommand{\cF}{\mathcal{F}}
\newcommand{\cL}{\mathcal{L}}
\newcommand{\cM}{\mathcal{M}}
\newcommand{\cP}{\mathcal{P}}
\newcommand{\cQ}{\mathcal{Q}}
\newcommand{\cS}{\mathcal{S}}
\newcommand{\scL}{\mathscr{L}}
\renewcommand{\pod}[1]{\allowbreak\mathchoice
  {\if@display \mkern 18mu\else \mkern 2mu\fi (#1)}
  {\if@display \mkern 6mu\else \mkern 2mu\fi (#1)}
  {\mkern4mu(#1)}
  {\mkern4mu(#1)}
}
\newcommand{\sdfrac}[2]{\mbox{\small$\displaystyle\frac{#1}{#2}$}}
\newcommand{\Sa}{\cS} 
\newcommand{\altshortminus}{\raisebox{-1pt}{\protect\scalebox{1.1}{-}}}
\newcommand{\Sm}{\cS_{\altshortminus}}
\newcommand{\z}{\operatorname{z}}
\newcommand{\dE}{\operatorname{d_E}}
\newcommand{\dP}{\operatorname{d_{pc}}}
\newcommand{\scBpc}{\mathscr{B}_{\mathrm{pc}}}
\DeclareMathOperator*{\diamE}{\operatorname{diam}_{\raisebox{-2pt}{\scriptsize E}\hspace*{-1.05pt}}}
\newcommand{\ZZ}{\mathbb{Z}}
\newcommand{\ZZp}{\mathbb{Z}_0^{+}}
\renewcommand{\ZZp}{\mathbb{Z}_{\ge 0}}
\newcommand{\NN}{\mathbb{N}}
\theoremstyle{plain}
\newtheorem{theorem}{Theorem}
\newtheorem{lemma}{Lemma}[section]
\newtheorem{problem}{Problem}[section]
\theoremstyle{remark}
\newtheorem{remark}{Remark}[section]
\theoremstyle{definition}
\newtheorem*{claim*}{Claim}
\definecolor{orange}{rgb}{1,0.5,0}
\definecolor{Ggreen}{rgb}{0.,0.575,0.0128}
\definecolor{Bblue}{rgb}{0.016,.132,0.91}
\renewcommand*{\backref}[1]{}
\renewcommand*{\backrefalt}[4]{%
  \ifcase #1 %
No citations.
  \or
(page #2).%
  \else
(pages #2).%
  \fi%
}
\crefname{lem}{Lemma}{Lemmas}
\crefname{rem}{Remark}{Remarks}
\crefname{thm}{Theorem}{Theorems}
\crefname{cor}{Corollary}{Corollaries}
\crefname{equation}{Equation}{Equations}
\crefname{prop}{Proposition}{Propositions}
\crefname{conj}{Conjecture}{Conjectures}
\crefname{section}{Section}{Sections}
\crefname{obs}{Observation}{Observations}
\crefname{claim}{Claim}{Claims}
\def\mysequence#1{\expandafter\@mysequence\csname c@#1\endcsname}
\def\@mysequence#1{%
  \ifcase#1\or left\or right\or altceva\else\@ctrerr\fi}
\newcommand{\Qu}{\cQ_{\mathrm{I}}}
\newcommand{\Qd}{\cQ_\mathrm{II}}
\newcommand{\Qt}{\cQ_\mathrm{III}}
\newcommand{\Qp}{\cQ_\mathrm{IV}}
\renewcommand{\binom}[3][0.74]{\scalebox{#1}{$\dbinom{#2}{#3}$}}
\begin{document}

\title[On the central ball in a translation invariant involutive field]
{On the central ball in a translation invariant involutive field}

\author[C. Cobeli, A. Raghavan, A. Zaharescu]
{Cristian Cobeli, Aaditya Raghavan, Alexandru Zaharescu}

\address[CC, AZ]{\normalfont 
``Simion Stoilow'' Institute of Mathematics of the Romanian Academy,~21 Calea Griviței Street, 
P. O. Box 1-764, Bucharest 014700, Romania}
\address[AR, AZ]{\normalfont
Department of Mathematics,
University of Illinois at Urbana-Champaign,
Altgeld Hall, 1409 W. Green Street,
Urbana, IL, 61801, USA\vspace{7pt}}

\email{cristian.cobeli@imar.ro}
\email{ar58@illinois.edu}  
\email{zaharesc@illinois.edu}  

\subjclass[2020]{Primary 51K99; Secondary 51F99, 11B99}




\thanks{Key words and phrases: 
lattice points,
parabolic-taxicab distance,
parabolic-taxicab ball,
translation-invariant-involutive operator,
partition with parabolas}

\begin{abstract}
The iterated composition of two operators, both of which are involutions and translation invariant, partitions the set of lattice points in the plane into an infinite sequence of discrete parabolas. Each such parabola contains an associated stairway-like path connecting certain points on it, induced by the alternating application of the aforementioned operators. Any two lattice points in the plane can be connected by paths along the square grid composed of steps either on these stairways or towards taxicab neighbors. This leads to the notion of the parabolic-taxicab distance between two lattice points, obtained as the minimum number of steps of this kind needed to reach one point from the other.

In this paper, we describe patterns generated by points on paths of bounded parabolic-taxicab length and provide a complete description of the balls centered at the origin. In particular, we prove an earlier conjecture on the area of these balls.
\end{abstract}
\maketitle

\section{Introduction}
Inspired by the Ducci game~\cite{CPZ2016,CM1937} and the Proth-Gilbreath procedure 
(see~\cite{Odl1993,BCZ2024a} and the references therein),
which involves higher-order differences calculated recursively,
in a tessellation problem~\cite{BCZ2024b} regarding covering the plane with integers,
a particular type of operators that are linear in all their components 
are proven to be useful. 
Following~\cite{CZ2024}, their two-dimensional version is defined as follows: 
for any $(x,y)\in\ZZ^2$ let
\begin{align*}
    L'(x,y)& = (-x+2y+1,y),\
    \text{ and }\
    L''(x,y) = (x,2x-y+1).
\end{align*}
Among operators of this type that are linear in both variables 
and have arbitrary coefficients, $L'$ and $L''$ are the only ones 
that are both involutions and translation invariant.
Starting with an arbitrary point $P\in\ZZ^2$, 
the repeated application of $L'$ and $L''$ 
generates a sequence of points, which are the nodes of 
a stairway-like path situated alternately on the branches of a parabola.
These parabolas are parallel and the plane $\ZZ^2$ is completely covered by them.

In order to be able to jump from one parabola to another, 
let $M',M'',M''',M^{iv}$ be the operators that provide movement 
on the grid in all directions towards the nearest neighbors:
\begin{align*}
    M'(x,y)& = (x+1,y),& M''(x,y)& = (x-1,y),\\
    M'''(x,y)& = (x,y+1),& M^{iv}(x,y)& = (x,y-1).
\end{align*}
These four operators lead to the taxicab geometry initiated in 1952 
by Menger~\cite{Men1952}
(see {\c{C}}olako{\u{g}}lu~\cite{Col2019} and the references therein 
for more recent developments on a generalization of the taxicab distance).
From a geometric perspective, we now have two ways of moving in~$\ZZ^2$: 
one fast, following the steps of the stairways, and the other slow, 
moving only towards the nearest points in the grid.
Then, by combining in any order the operators from
$\cL\cup\cM$, 
where $\cL = \{L',L''\}$ and $\cM = \{M',M'',M''',M^{iv}\}$,
we see that there are infinitely many paths in $\ZZ^2$
from one point to another. 
Considering each step to have a length of $1$, 
whether it is on a parabola or towards a nearby neighbor point, 
we obtain the \textit{parabolic-taxicab distance} $\dP$, 
which is defined as the number of steps 
of the shortest path connecting two points.
Thus, for every $P,Q \in\ZZ^2$, let
\begin{align*}
    \dP(P,Q) &:= \min\{n\in\ZZ_{\ge 0} :
	F_1,\dots,F_n\in\{\mathrm{id}\}\cup\cL\cup\cM,\ 
	F_1\circ\cdots\circ F_n(P) = Q\}.
\end{align*}
Upon observing that $\dP$ satisfies the axioms of a distance,
for every point $C\in\ZZ^2$ and every non-negative integer $r$, 
we can define the \textit{parabolic-taxicab closed ball} 
with center $C$ and radius $r$ 
as the set of lattice points that are 
at a distance at most~$r$ from $C$, that is,
\begin{equation*}
    \scBpc(C,r) :=\big\{
    X\in\ZZ^2 : \dP(C,X) \le r
    \big\}.
\end{equation*}
Then, the \textit{perimeter} or the \textit{boundary} of 
$\scBpc(C,r)$ is
\begin{align*}
    \partial \scBpc(C,r) &:= \{(x,y) \in \ZZ^{2} : \dP(C, X) = r\}.
\end{align*}
A representation of these objects can be seen in Figure~\ref{FigureBallAndBorders}.
A tally on the small balls with center $O=(0,0)$ indicates that for $r\ge 0$ 
the number of points in $\scBpc(O,r)$ belongs to the sequence:
\begin{align*}
\scalebox{0.91}{
  1, 5, 15, 37, 75, 135, 221, 339, 493, 689, 931, 1225, 1575, 1987, 2465, 3015, 3641, 4349,  5143,\dots  }
\end{align*}
Then the sequence of the perimeters coincides with the sequence of gaps of the above 
sequence, except for $r=0$, where $\#\partial\scBpc(O,0)=1$. Thus, for $r\ge 0$, 
the number of points in $\partial\scBpc(O,r)=1$ generate the sequence:
\begin{align*}
\scalebox{0.91}{
1, 4, 10, 22, 38, 60, 86, 118, 154, 196, 242, 294, 350, 412, 478, 550, 626, 708, 794,\dots}
\end{align*}

\begin{figure}[hbt]
 \centering
 \hfill
     \includegraphics[height=\textwidth, angle=-90]{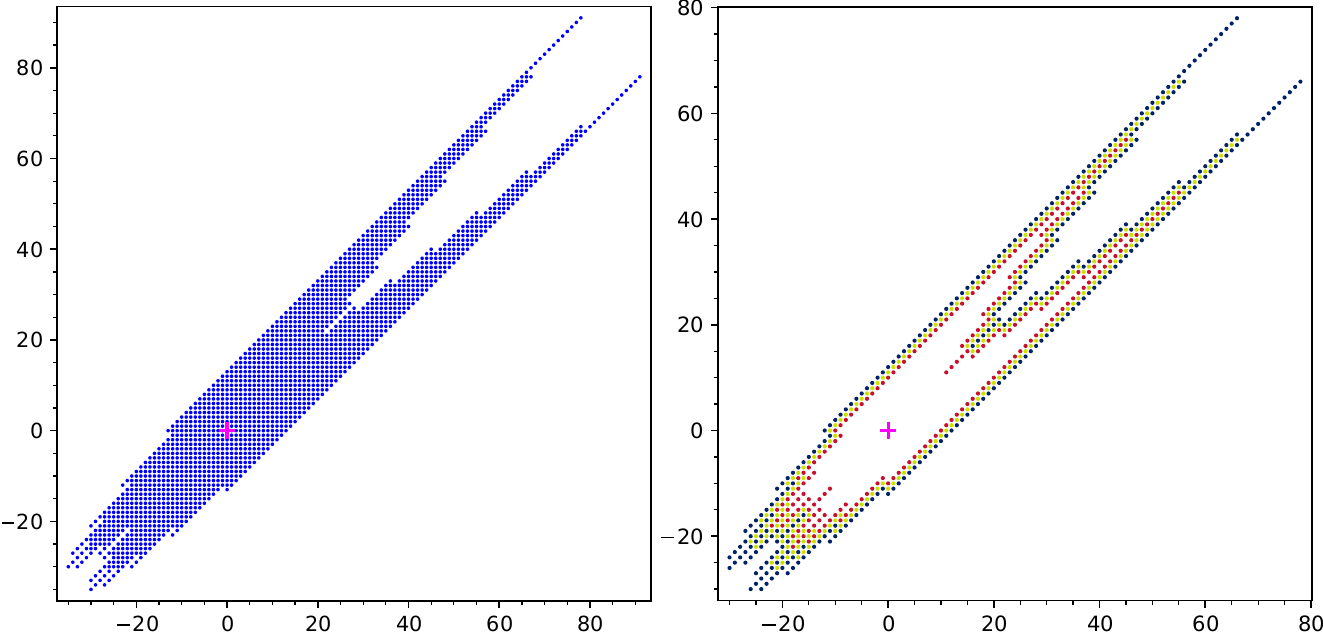}
    \hfill\mbox{}\\
\caption{In the image on the left-side $\scBpc(O,13)$ is shown, and in the image on the right-side the three consecutive borders of the balls with radii $10, 11$, and $12$ are shown. 
The cardinalities are: $\#\scBpc(O,13)=1987$,
$\#\partial \scBpc(O,10)= 242$, 
$\#\partial \scBpc(O,11)= 294$, and
\mbox{$\#\partial \scBpc(O,12)=350$.}
} 
\label{FigureBallAndBorders}
 \end{figure} 
 
Our aim here is to explore the atypical implications 
of measuring distances with $\dP$.
A series of tests with small radii show that parity plays an essential role 
in how the circles $\partial\scBpc(O,r)$ 
with $0\le r\le R$ add up in successive layers 
to form a complete ball $\scBpc(O,R)$. 
For instance, we noted that $\partial\scBpc(O,r)$ contains points 
on the diagonal $y=x$ if and only if~$r$ is even. 
Furthermore, we observed that $\partial\scBpc(O,r)$ 
does not contain points on the lines $y = x + |r-1|$ and also, 
alternatively, not on the parallel lines of integer $y$-intercept
that are increasingly closer to the diagonal $y=x$. 
Since the number of points added at step $r$ is as large as $O\big(r^2\big)$, 
when the parity shifts from one radius to the next, it causes the balls to develop
a pulsating characteristic as their radius increases.
These preliminary remarks 
indeed prove to be true for every $r\ge 0$, 
and they are part of the complete characterization of $\scBpc(O,r)$, 
as described in the following theorem.
\begin{theorem}\label{Theorem0}
Let $r\ge 0$ and let $c$ be integers.
Let $\Sa(r,c)$ be the set that contains the first coordinate of 
all points situated on both the boundary of $\scBpc(O,r)$ 
and the line $y=x+c$.
We denote the negative part of this set by $\Sm(r,c)=\Sa(r,c)\cap \ZZ_{<0}$.

\begin{enumerate}[wide, labelwidth=0pt, labelindent=0pt]
\setlength{\itemsep}{6pt}
\newlength{\myl}
  \settowidth{\myl}{\texttt{ I}}
 {\setlength\itemindent{\myl}
    \item[\namedlabel{Theorem0:I}{\normalfont{\texttt{I.}}}]
If $r$ and $c$ have opposite parity, or $|c| > r$, then $\Sa(r,c)=\emptyset$.
}
   \item[\namedlabel{Theorem0:II}{\normalfont{\texttt{II.}}}]
 Suppose $r$ and $c$ have the same parity 
and let $k$ be an integer defined by  $|c| = r - 2k$.
Then $\Sa(r,c)$ and $\Sm(r,c)$ are translations of intervals of 
integers, which can be explicitly expressed as follows:
\begin{enumerate}[topsep=6pt]
    \item[\namedlabel{Theorem0:IIa}{\normalfont{(\texttt{a})}}]
   If $|c| < r$ and $c \equiv r \pmod 2$, then:
\begin{equation}\label{eqChar0}
  \Sa(r,c) \cap \ZZp =
 \begin{cases}
  \left[\binom[0.81]{r-1-k}{2} + k, \binom[0.81]{r-k}{2} + k\right] \cap \ZZ,
    &\text{ if } 0 \leq c \leq r-2;\\[10pt]
 \left[\binom[0.81]{r-k}{2} + 1, \binom[0.81]{r-k+1}{2}\right] \cap \ZZ,
   &\text{ if } -(r-2) \leq c < 0.
 \end{cases}
\end{equation} 
  \item[\namedlabel{Theorem0:IIb}{\normalfont{(\texttt{b})}}]
If $-r+2 \le c \le 0$, then:
\begin{equation}\label{eqChar1}
  \Sm(r,c) 
  = \begin{cases}
    \big([k-r,-1] \cap \ZZ\big) + c(k-1) - \binom[0.81]{k}{2},  
    &\text{if } k \equiv 1 \pmod 2;\\[8pt]
    \big([0,r-k-1]\cap \ZZ\big) + ck - \binom[0.81]{k+1}{2}, 
    &\text{if } k \equiv 0 \pmod 2,
    \end{cases}
\end{equation} 
    \item[\namedlabel{Theorem0:IIc}{\normalfont{(\texttt{c})}}]
  If $0 < c \le r-2$, then:
\begin{equation}\label{eqChar2}
  \Sm(r,c) 
  = \begin{cases}
    \big([0,r-k-1] \cap \ZZ\big) - c(k+1) - \binom[0.81]{k+1}{2},  
              &\text{if } k \equiv 1 \pmod 2;\\[8pt]
    \big([k-r,-1] \cap \ZZ\big) - ck - \binom[0.81]{k}{2},
              &\text{if } k \equiv 0 \pmod 2.
    \end{cases}
\end{equation}  
     \item[\namedlabel{Theorem0:IId}{\normalfont{(\texttt{d})}}]
  Lastly, we have: 
\begin{equation}\label{eqChar3}
  \begin{split}
  \Sa(r,r) &= \left[-r,\binom[0.81]{r}{2}\right] \cap \ZZ ,\ \text{ and }\
  \Sa(r,-r) = \left[0,\binom[0.81]{r+1}{2}\right] \cap \ZZ .
  \end{split}
\end{equation}
\end{enumerate}
\end{enumerate}
\end{theorem}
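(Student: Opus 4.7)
The plan is to establish the theorem in three phases: necessary conditions (Part I), explicit construction of paths reaching every claimed point, and an inductive optimality argument that rules out the remaining candidates.

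\textbf{Part I.} A direct computation shows that $L'$ sends $y - x$ to $-(y - x) - 1$, $L''$ sends $y - x$ to $-(y - x) + 1$, and each of $M', M'', M''', M^{iv}$ shifts $y - x$ by $\pm 1$; in particular, every admissible step flips the parity of $y - x$. Since $O$ satisfies $y - x = 0$, after $r$ steps the value of $y - x$ has the parity of $r$, which forces $c \equiv r \pmod 2$. A routine induction on $r$ using the same formulas yields $|y - x| \le r$ at every lattice point of $\scBpc(O, r)$, giving $|c| \le r$.

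\textbf{Construction of paths (existence in Part II).} I would exploit the two extremal orbits through the origin. Starting from $O$ and alternating $L'$ and $L''$ (beginning with $L'$), one obtains the sequence $(0, 0), (1, 0), (1, 3), (6, 3), (6, 10), (15, 10), \ldots$, whose $x$-coordinates are the even-indexed triangular numbers $\binom{2n}{2}$; the symmetric orbit beginning with $L''$ produces the odd-indexed triangular numbers $\binom{2n+1}{2}$. These landmarks explain the appearance of $\binom{r-k}{2}$ in \eqref{eqChar0}--\eqref{eqChar3}: to reach the line $y = x + c$ with $|c| = r - 2k$, one walks $r - 2k$ alternating $L$-steps along a parabola (arriving on a diagonal at vertical offset $\pm(r - 2k)$ from $y = x$) and spends the remaining $2k$ steps as taxicab moves to translate the landing point. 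The commutation identities $L' M' = M'' L'$ and $L' M'' = M' L'$, together with their $L''$-counterparts, let one redistribute the $M$-steps across the $L$-block and thereby sweep out the entire claimed interval of admissible $x$-coordinates. For the negative-$x$ part, similar constructions must begin with $M$-steps, since $L'$ and $L''$ applied to points near $O$ immediately carry the state into the positive quadrant; this asymmetry explains the parity split between $k$ even and $k$ odd in \eqref{eqChar1} and \eqref{eqChar2}.

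\textbf{Optimality (exclusions in Part II).} The main obstacle is proving that no lattice point outside the claimed intervals lies on $\partial \scBpc(O, r)$. I would proceed by induction on $r$, using the recursion
\begin{equation*}
  \scBpc(O, r) \,=\, \scBpc(O, r-1) \,\cup\, \bigcup_{F \in \cL \cup \cM} F\bigl(\scBpc(O, r-1)\bigr),
\end{equation*}
valid because every operator in $\cL \cup \cM$ is either an involution or has its inverse in $\cM$. The inductive hypothesis describes $\scBpc(O, r-1)$ diagonal by diagonal, and since each $F$ sends the diagonal $y = x + c$ to a specific diagonal (by the analysis used in Part I), each image $F\bigl(\scBpc(O, r-1)\bigr)$ can be read off diagonally. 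Intersecting the six images and the earlier ball with a fixed line $y = x + c$ then reduces the problem to a lengthy but elementary comparison of integer intervals matching the right-hand sides of \eqref{eqChar0}--\eqref{eqChar3}. I expect the most delicate step to be checking that each endpoint $\binom{r-k}{2} + k$ and $\binom{r-k+1}{2}$ cannot be exceeded by any alternative interleaving of $L$- and $M$-steps, which will require a careful extremal analysis along the two parabolas through $O$.
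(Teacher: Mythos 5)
Your Part~\ref{Theorem0:I} argument is correct and complete: each of the six operators flips the parity of $y-x$ and changes $|y-x|$ by exactly $1$ (for $L'$ one checks $c\mapsto -c-1$, for $L''$ one checks $c\mapsto -c+1$), so after $r$ steps from $O$ one has $c\equiv r\pmod 2$ and $|c|\le r$. This is essentially the content of the paper's first two lemmas, obtained more cleanly via an invariant. Your overall strategy for Part~\ref{Theorem0:II} — induction on $r$ using the one-step recursion $\scBpc(O,r)=\scBpc(O,r-1)\cup\bigcup_{F}F\big(\scBpc(O,r-1)\big)$ and reading everything off diagonal by diagonal — is also the paper's strategy, and your commutation identities $L'\circ M'=M''\circ L'$, $L'\circ M''=M'\circ L'$ are correct.

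However, for Part~\ref{Theorem0:II} the proposal stops at the point where the theorem actually begins. The entire content of \eqref{eqChar0}--\eqref{eqChar3} is the verification that, on each fixed diagonal $y=x+c$, the union of the six images of the radius-$(r-1)$ boundary minus the smaller ball is \emph{exactly} the claimed translate of an interval, and you defer this to ``a lengthy but elementary comparison of integer intervals'' without exhibiting the mechanism that makes the comparison close up. Concretely, what is missing is the recursion that drives the negative part: the paper shows that $\Sm(r,c)$ equals $\Sm\big(r-1,\z(c)\big)$ with $\z(c)=\pm(|c|+1)$, up to an explicit shift $\z(c)-c$ when $c>0$, and then unfolds this $k$ times down to the extremal diagonals $c=\pm(r-k)$ handled by the analogue of your ``extremal orbit'' observation; the alternation of the formulas in \eqref{eqChar1}--\eqref{eqChar2} with the parity of $k$ is precisely the signature of this unfolding, and nothing in your plan predicts or produces it. Likewise, your existence argument asserts that redistributing $M$-steps across an $L$-block ``sweeps out the entire claimed interval,'' but you never compute that the reachable $x$-coordinates are exactly $\big[\binom{r-1-k}{2}+k,\binom{r-k}{2}+k\big]\cap\ZZ$ rather than a proper subset or superset, nor do you verify that the constructed points are not already in $\scBpc(O,r-2)$ (which is needed for them to lie on the boundary at radius $r$, and which the paper checks explicitly). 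As written, the proposal proves Part~\ref{Theorem0:I} but only outlines Part~\ref{Theorem0:II}; the key lemma — the shifted recursion for $\Sm$ and its closed-form unfolding — is absent.
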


Next, as a result, we obtain the area $\#\scBpc(O,r)$ 
and the perimeter $\#\partial\scBpc(O,r)$.
\begin{theorem}\label{Theorem1}
Let $r\ge 0$ be an integer.
Then the number of points on the boundary of the
closed parabolic-taxicab ball of radius $r$ and center $O=(0,0)$ is
\begin{equation}\label{eqTheorem1a}
  \#\partial \mathcal{B}(O,r) 
    = \frac{1}{2}\big(5r^2 - r\big) 
    -\left\lceil\frac{r-1}{2}\right\rceil
    +\left\lceil\frac{r}{r+1}\right\rceil
    + 1,
\end{equation}
and the number of points on the entire closed ball is
\begin{equation}\label{eqTheorem1b}
    \#\scBpc(O,r) = \sdfrac{1}{12}\big(10r^3 + 9r^2 + 23r\big) 
    + \frac{1}{2}\left\lceil\frac{r}{2}\right\rceil + 1.
\end{equation}
\end{theorem}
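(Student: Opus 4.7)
The proof reduces to bookkeeping on top of \cref{Theorem0}. First, decompose the boundary by its intersections with the lines $y=x+c$,
\[
\partial\scBpc(O,r) = \bigsqcup_{c\in\ZZ}\bigl\{(x,x+c) : x\in\Sa(r,c)\bigr\},
\]
so that $\#\partial\scBpc(O,r) = \sum_{c\in\ZZ}\#\Sa(r,c)$. By part \ref{Theorem0:I}, only integers $c$ with $|c|\le r$ and $c\equiv r\pmod 2$ contribute. For each such $c$ with $|c|<r$, write $\Sa(r,c) = (\Sa(r,c)\cap\ZZp)\sqcup\Sm(r,c)$ and invoke parts \ref{Theorem0:IIa}--\ref{Theorem0:IIc}. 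A uniform calculation based on the identity $\binom{n}{2}-\binom{n-1}{2}=n-1$ shows that every one of the resulting translated integer intervals has exactly $r-k$ elements, where $k=(r-|c|)/2$. Therefore
\[
\#\Sa(r,c) = 2(r-k) = r+|c|\qquad\text{whenever }|c|<r\text{ and }c\equiv r\pmod 2.
\]
The two extremal lines $c=\pm r$ are handled directly by part \ref{Theorem0:IId}, which gives $\#\Sa(r,r) = \#\Sa(r,-r) = \binom{r+1}{2}+1$.

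Combining these contributions yields
\[
\#\partial\scBpc(O,r) = 2\binom{r+1}{2} + 2 + \sum_{\substack{|c|<r\\ c\equiv r \pmod 2}}(r+|c|).
\]
The remaining sum is an arithmetic progression whose index set depends on the parity of $r$: for odd $r$, $|c|$ ranges over $\{1,3,\ldots,r-2\}$ with each value appearing twice; for even $r$ it ranges over $\{0,2,\ldots,r-2\}$ with $|c|=0$ appearing once and the rest twice. Evaluating each case and merging, the polynomial part equals $(5r^2-r)/2$ regardless of parity, the discrepancy between the two parities is exactly $-\lceil(r-1)/2\rceil$, and the indicator term $\lceil r/(r+1)\rceil$ adjusts only the degenerate case $r=0$, where the sum is empty. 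This establishes \eqref{eqTheorem1a}.

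Formula \eqref{eqTheorem1b} then follows by summing \eqref{eqTheorem1a} over $j=0,1,\ldots,r$, using the disjoint decomposition $\scBpc(O,r) = \bigsqcup_{j=0}^{r}\partial\scBpc(O,j)$. The three ingredients required are the standard closed forms $\sum_{j=0}^{r}j^2 = r(r+1)(2r+1)/6$, $\sum_{j=0}^{r}j = r(r+1)/2$, and $\sum_{j=0}^{r}\lceil(j-1)/2\rceil = \lfloor r^2/4\rfloor$; after collecting terms and re-expressing $\lfloor r^2/4\rfloor$ in terms of $\lceil r/2\rceil$ (by splitting the parity of $r$ one final time), the sum collapses to $(10r^3+9r^2+23r)/12 + \tfrac{1}{2}\lceil r/2\rceil + 1$. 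The principal obstacle is purely organizational: aligning the parity of $r$ across all four subcases of \cref{Theorem0} so that the uniform count $r+|c|$ is extracted cleanly, and then ensuring that the two parity branches in the subsequent arithmetic assemble into the single ceiling-function expressions displayed in \eqref{eqTheorem1a} and \eqref{eqTheorem1b}; no further geometric input beyond \cref{Theorem0} is needed.
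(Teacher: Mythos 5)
Your proposal is correct and follows essentially the same route as the paper: decompose $\partial\scBpc(O,r)$ into its intersections with the lines $y=x+c$, use \cref{Theorem0} to get the uniform count $2(r-k)=r+|c|$ for the interior lines and $\binom{r+1}{2}+1$ for $c=\pm r$, sum over $c$ by parity of $r$, and then sum the perimeter formula over radii to get the volume. The only differences are cosmetic (you split $\Sa=(\Sa\cap\ZZp)\sqcup\Sm$ explicitly and use $\sum_{j\le r}\lceil (j-1)/2\rceil=\lfloor r^2/4\rfloor$, while the paper checks $r=0,1$ by hand and organizes the final summation by parity), so no further commentary is needed.
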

Theorem~~\ref{Theorem1} proves, in particular, the unfolded formula for the area of the parabolic-taxicab ball centered at the origin as conjectured in~\cite{CZ2024}. 
\section{Notation and Notes}
\subsection{Notation}
Let $\ZZp$ denote the set of non-negative integers
and let $O=(0,0)$ denote the origin of $\ZZ\times\ZZ$. 
The set of lattice points in each of the four quadrants 
of the real plane are:
\begin{align*}
    \Qu &:= \{(x,y) \in \ZZ^{2} : x \geq 0, y \geq 0\},&
    \Qd &:= \{(x,y) \in \ZZ^{2} : x \leq 0, y \geq 0\},\\
    \Qt &:= \{(x,y) \in \ZZ^{2} : x \leq 0, y \leq 0\},&
    \Qp &:= \{(x,y) \in \ZZ^{2} : x \geq 0, y \leq 0\}.\\  
\end{align*}
We use Minkowski's notation for the sum between a number and a set:
\begin{align*}
  a+\cM := \big\{a + x : x \in \cM\big\} =: \cM + a.
\end{align*}
We let $\scL(c)$ denote the set of lattice points that lie on the line $y=x+c$,
that is,
\begin{align*}
    \scL(c) &:= \{(x,y) \in \ZZ^{2} : y = x + c\}.
\end{align*}
Further, let $\cF(r,c)$ denote the set of points on the frontier of 
$\scBpc(O,r)$ that are aligned along the line $\scL(c)$, so that
\begin{align*}
    \cF(r,c) &:= \partial \scBpc(O,r) \cap \scL(c). 
\end{align*}
With these notations, the sets characterized in \cref{Theorem0}
are exactly the projections of~$\cF(r,c)$ onto the $x$-axis:
\begin{align*}
    \Sa(r,c)&:=\big\{x : (x,x+c)\in \cF(r,c)\big\},\\
\intertext{and}
    \Sm(r,c)&:= \Sa(r,c)\setminus \ZZp.
\end{align*}
Examples of such sets are shown in Figure~\ref{FigureSaSm}.
\begin{figure}[htb]
 \centering
 \hfill
    \includegraphics[height=\textwidth,angle=-90]{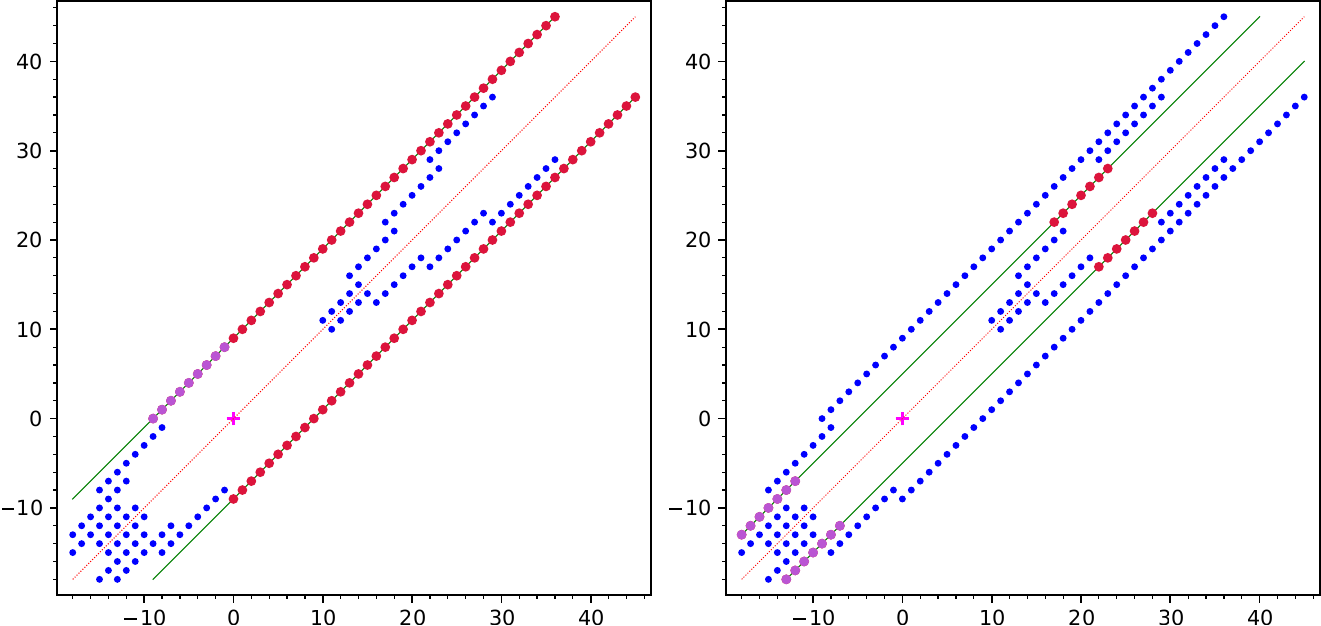}
    \hfill\mbox{}\\
\caption{Highlighted are the sets whose projections on the first coordinate are equal to $\Sa(r,c)$ and $\Sm(r,c)$ for $r=9$, $c =\pm 9$ (left) and $r=9$, $c =\pm 5$ (right).
Thus, we have: 
$\Sm(9,9)=[-9,-1]\cap\ZZ$, $\Sa(9,9)=[-9,36]\cap\ZZ$;
$\Sa(9,-9)=[0,45]\cap\ZZ$, $\Sm(9,-9)=\emptyset$;
$\Sm(9,5)=[-18,-12]\cap\ZZ$, $\Sa(9,5)=\big([-18,-12]\cup[17,23]\big)\cap\ZZ$;
$\Sm(22,-5)=[-13,-7]\cap\ZZ$, $\Sa(9,-5)=\big([-13,-7]\cup[22,28]\big)\cap\ZZ$.}
\label{FigureSaSm}
 \end{figure}
\subsection{Notes}
We include here a few general introductory remarks on the parabolic-taxicab distance
and the ball centered at the origin that it generates.
\subsubsection{The set of lattice points $\ZZ^2$ partitioned by parabolas, and the stairways induced within parabolas by alternating $L'$ and $L''$} 
Given a point $P\in\ZZ^2$ and a sequence of operators
$\{L_n\}_{n\ge 0}\subset\cL^{\NN}$, we obtain the sequence of points
$\cP=\{\big(L_1\circ\cdots\circ L_n\big)(P)\}_{n\ge 0}$.
According to~\cite[Theorem 2]{CZ2024}, all points in $\cP$ belong 
to a parabola of vertex $(m,m)$, where $m$ is the minimum of all coordinates of points in $\cP$.
These parabolas are disjoint, they are translations of one another along the first diagonal $y=x$ (see image on the left of Figure~\ref{FigureParabolas}),
and their union partitions the set of all lattice points in the plane.
%
\begin{figure}[htb]
 \centering
 \hfill
    \includegraphics[height=\textwidth,angle=-90]{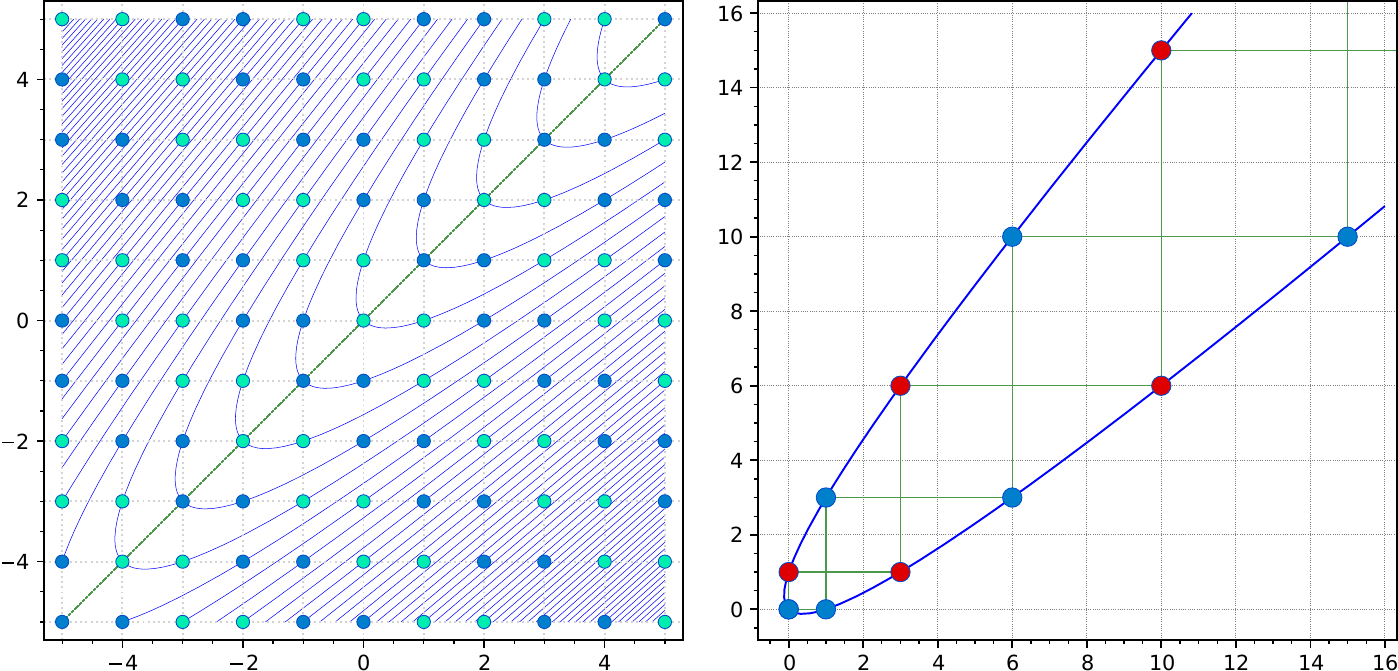}
    \hfill\mbox{}\\
\caption{The real plane partitioned by parabolas $x+y+2m=(x-y)^2$, with $m\in\ZZ$ (left).
Steps of length $1$ as measured by the parabolic-taxicab distance (right) by applying $L'$, then $L''$, (or vice-versa) and continuing to alternate. 
The indicated points have coordinates $(T_k,T_{k+1})$ and $(T_{k+1},T_{k})$, 
where $T_k=\binom{k+1}{2}$ are the triangular numbers. 
}
\label{FigureParabolas}
 \end{figure}
Also, since both $L'$ and~$L''$ are involutions, that is, 
$L'\circ L'= \mathrm{id} = L''\circ L''$,
let us remark that $\cP$ has no repetitions if and only 
$L'$ and $L''$ appear in the compositions
in alternating order.
Furthermore, the first application of either operator $L'$ or $L''$ 
in the sequence of compositions establishes the direction in which 
the stairway-like pattern created by the points of $\cP$ 
arranged on the parabola is traversed alternately on one branch and then on the other (see the image on the right of Figure~\ref{FigureParabolas}).

\subsubsection{Chords in the parabolic-taxicab distance}
In distance $\dP$, the chords and, in particular, the diameters (the longest chords) 
of $\partial\scBpc(O,r)$, have unusual characteristics compared 
to those in Euclidean spaces.
For example, if $r=6$, there are no chords of length $2\cdot 6=12$, 
so that the parabolic-taxicab diameter of $\partial\scBpc(O,6)$ is $10$.
Also, we note that in this case, $P=(-6,-6)$ has four diametrically 
opposite points, while $P=(2,8)$ and $P=(21,15)$ have none.
\begin{figure}[htb]
 \centering
 \hfill
    \includegraphics[height=\textwidth,angle=-90]{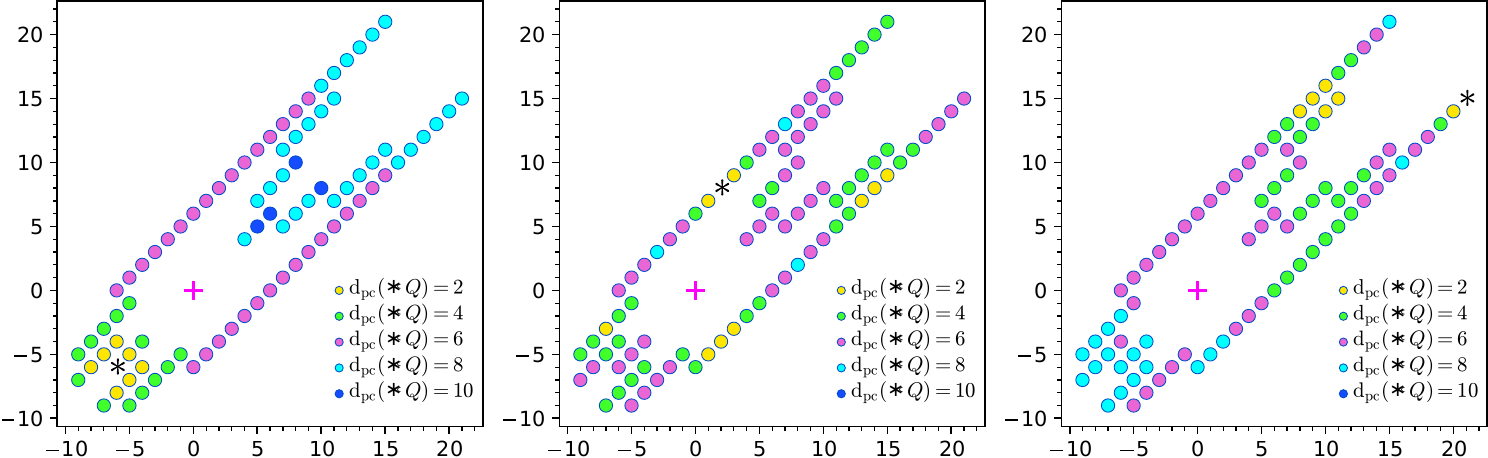}
    \hfill\mbox{}\\
\caption{The boundary of $\scBpc(O,6)$. 
Three points $P$ marked with $\ast$
are chosen as follows: 
$P=(-6,-6)$ (left), 
$P=(2,8)$ (middle), 
$P=(21,15)$ (right). 
All points $Q\in\partial\scBpc(O,6)$ are shown in colors indicating 
the parabolic-taxicab distance from $P$ to~$Q$.
}
\label{FigureChords}
 \end{figure}
Furthermore, we found that all the tested chords have an even 
parabolic-taxicab length as we did not find any with an odd length.
In Figure~\ref{FigureChords}, all points situated at a certain 
parabolic-taxicab distance from the three chosen points are shown
in a specific color and their frequencies are given in Table~\ref{TableChords}.
\begin{center}
\addtolength{\tabcolsep}{-1pt}
\begin{table}[ht]
\centering
\scalebox{0.92}{ 
\begin{tabular}{lccccc}
   \toprule
   & $\dP(P,Q)=2$ & $\dP(P,Q)=4$ & $\dP(P,Q)=6$ & $\dP(P,Q)=8$ & $\dP(P,Q)=10$
     \\ \midrule
$P=(-6,-6)$ & 7 & 13 & 32 & 29 & 4 \\[1mm]
$P=(2,8)$ & 6 & 23 & 37 & 19 & 0 \\[1mm]
$P=(21,15)$ & 9 & 32 & 41 & 3 & 0 \\
  \bottomrule
\end{tabular}
}  
\caption{The frequency of points $Q\in\partial\scBpc(O,6)$
that are at the parabolic-taxicab distance $2,4,6,8,10$ from $P=(-6,-6)$, 
$P=(2,8)$,  and $P=(21,15)$. 
The location of these points is indicated by colors in Figure~\ref{FigureChords}.
(Note, the cardinality $\#\partial\scBpc(O,6)=86$.) 
}
\label{TableChords}
\end{table}
\end{center}
A more in-depth study is necessary to characterize the properties of the
parabolic-taxicab distances between pairs of points in $\partial\scBpc(O,r)$.
Here are some interesting questions to consider.
Here are some interesting questions to consider.
\begin{problem}\label{Problem}
    Let $r>3$ be an integer.
 \begin{enumerate}[label=\textup{(\roman*)}]
\item\label{Problem1}
What is the parabolic-taxicab diameter of $\partial\scBpc(O,r)$?
\item\label{Problem2}
Show that there are no chords $PQ$ with $P,Q\in\partial\scBpc(O,r)$
and odd $\dP(P,Q)$.
\item\label{Problem3}
Is it true that the parabolic-taxicab diameter of $\partial\scBpc(O,r)$
is $\leq 2r$?
\end{enumerate}
\end{problem}

Short arguments proving part~\ref{Problem2} and part~\ref{Problem3} above (the latter in the affirmative) were communicated to the authors by Omer Cantor. We present them below.

For~\ref{Problem2}, consider the graph $G$ on $\ZZ^{2}$ in which two distinct 
vertices $(a_1,b_1)$ and $(a_2,b_2)$ 
are adjacent if one can reach 
one of the points from the other by a single application 
of $L', L'', M', M'', M'''$, or $M^{iv}$. More explicitly, the edges of $G$ are of the form $\left\{(m,k),(n,k)\right\}$ 
or $\left\{(k,m),(k,n)\right\}$, where 
$m+n = 2k+1$ or (without loss of generality,
assuming $m \geq n$) $m-n = 1$. 
Then, for each edge in $G$, we have $m \not\equiv n \pmod 2$ when represented in the aforementioned form. In particular, if two vertices $(a_1,b_1)$ and $(a_2,b_2)$ are adjacent, then $(a_1+b_1) \not\equiv (a_2+b_2) \pmod 2$. Thus, any closed walk in $G$ has even length, and so, for points $A,B,C \in \ZZ^2$, the sum $\dP(A,B)+\dP(B,C)+\dP(A,C)$ is even. It follows that if $A,B \in \partial\scBpc(C,r)$, then $\dP(A,B)$ is even.

The conclusion that the answer to~\ref{Problem3}
is ``yes'' follows from the fact that the function $f(r) = 2r - \operatorname{diam}\left(\partial\scBpc(v, r)\right)$ is non-decreasing in $r$. This can be seen, for instance, by observing that if $P,Q \in \partial\scBpc (O,r+1)$ are distinct and $P',Q' \in V(G)$ with $P'$ (resp. $Q'$) adjacent to $P$ (resp. $Q$), then $\dP(P,Q) \leq \dP(P',Q')+2 \leq \operatorname{diam}(\partial\scBpc(O,r))+2$, whence 
$\operatorname{diam}\left(\partial\scBpc(O,r+1)\right) \leq \operatorname{diam}\left(\partial\scBpc(O,r)\right)+2$. Then, since $\partial\scBpc(O,1) \leq 2$, 
it follows $\partial\scBpc(O,r) \leq 2r$ for $r \geq 1$.

\subsubsection{The Euclidean width and diameter} 

The geometric form of the ball is somewhat reminiscent of the shape of the swallows
representing the neighbor spacing distribution between Farey fractions 
(see~\cite{ABCZ2001,CZ2003, CVZ2010}), only this time the wings are attached 
along the body.
According to Theorem~\ref{Theorem0}, for each $r\ge 0$, 
the ball $\scBpc(O,r)$ is tangent to and completely 
contained between the lines $y=x\pm r$. Thus, the \textit{Euclidean width} of $\scBpc(O,r)$ is $\sqrt{2}r$.

The \textit{Euclidean diameter} of $\scBpc(O,r)$ is determined by 
the pairs of points farthest away from each other (in the Euclidean distance).
Theorem~\ref{Theorem0} shows that there are two such pairs of points, symmetric to each other with respect to the first diagonal $y=x$.
Indeed, by formula~\eqref{eqChar3}
we can choose $Q=Q(r)$, where
\begin{align}\label{eqQr}
   Q(r)=\big(r(r+1)/2,r(r-1)/2\big), \ \ \text{ for $r\ge 0$}.
\end{align}

Letting $\dE$ denote the Euclidean distance,
we find that for any $(x,y)\in \Qt$, we have:
\begin{align}\label{eqdE1}
  \dE\big(Q,(x,y)\big) &> \dE\big(Q,(x+1,y-1)\big),\\
  \dE\big(Q,(x,y)\big) &> \dE\big(Q,(x,y+1)\big)\label{eqdE2}.
\end{align}
Then,~\eqref{eqdE1} shows that $P$, the Euclidean diametrically opposite point to $Q$, 
must lay above the diagonal $x=y$ and 
as far to the left as possible, 
and condition~\eqref{eqdE2} shows that $P$ must be 
the lowest point among the potential multiple options of points that could be located on the same leftmost vertical line that borders the ball.

We let $P=P(r)$, where
\begin{align}\label{eqPr}
   P(r)=\big(x(r),y(r)\big),
\end{align} 
be the Euclidean diametrically opposite point to $Q(r)$. 
Let $c=c(r)$ be the $y$-intercept of the line 
$y=x+c$ where point $P(r)$ lies.
Then, following~\eqref{eqChar2}, we find that, 
for $r\ge 0$, we have:
\begin{align}
  c(r) & = r - 2\lfloor{r/3}\rfloor;\label{eqcr}\\
  x(r) & = -\lfloor{(r+1)(r+2)/6}\rfloor;\label{eqxr}\\
  y(r) & = x(r)+c(r).\label{eqyr}
\end{align}
Note that sequence $\{c(r)\}_{r\ge 0}$, 
whose elements begin with:
$0,1,2,1, 2, 3, 2, 3, 4, 3, 4, 5, 4, \dots$, 
follows the `terza rima' rhyme scheme of Dante Alighieri's 
``\textit{Divine Comedy}'',
starting from its second term~\mbox{\cite[{\href{https://oeis.org/A008611}{A008611}}]{oeis}}.
The absolute values of the initial elements of $\{x(r)\}_{r\ge 0}$ are:
$0,1,2, 3,5,7,9,12,15,18,\dots$, 
and the absolute values of the initial elements of $\{y(r)\}_{r\ge 0}$ are:
$0,0,0,  2,3,4,7,9,11,15,\dots$
(see \cite[{\href{https://oeis.org/A001840}{A001840}}]{oeis}, 
\cite[{\href{https://oeis.org/A236337}{A236337}}]{oeis}
and the references cited there for an overview of the properties that these sequences have).

Let $\diamE\big(\scBpc(O,r)\big)$ denote the Euclidean diameter of the ball.
Then, on using relations~\eqref{eqQr}, \eqref{eqPr},
\eqref{eqcr},\eqref{eqxr}, and \eqref{eqyr}, we 
calculate $\dE(P,Q)$, which gives the 
following closed form formula for the Euclidean diameter:
\begin{align*}
  \diamE&\big(\scBpc(O,r)\big) \\
  &= \Bigg(
   \bigg(\sdfrac{r(r+1)}{2} 
      + \Big\lfloor\sdfrac{(r+1)(r+2)}{6}\Big\rfloor\bigg)^2
   + \bigg(\sdfrac{r(r-3)}{2} 
      + \Big\lfloor\sdfrac{(r+1)(r+2)}{6}\Big\rfloor
      + 2\Big\lfloor\sdfrac{r}{3}\Big\rfloor\bigg)^2
  \Bigg)^{1/2},
\end{align*}
for $r\ge 0$. This generates 
a sequence in which the squares of the initial elements are:
$0,4,26, 106, 306, 680, 1360$, $2500, 4122, 6516,\dots$

\subsubsection{Remarks on the symmetry}
Let `$\sim$' be the operator that makes the associations:
$\widetilde{L'}=L''$; $\widetilde{L''}=L'$;
$\widetilde{M'}=M'''$; $\widetilde{M'''}=M'$;
$\widetilde{M''}=M^{iv}$; $\widetilde{M^{iv}}=M''$.
This pairwise symmetry of the operators in $\cL\cup\cM=\{L',L'',M',M'',M''',M^{iv}\}$ 
causes, geometrically, the points transformed by their compositions 
to mirror across the first diagonal.
To be exact, if $F_1,\dots,F_n\in\cL\cup\cM$, then for every $(a,b)\in\ZZ^2$
the midpoint of the segment with endpoints 
$F_1\circ\dots\circ F_n(a,b)$ and
$\widetilde{F_1}\circ\dots\circ \widetilde{F_n}(a,b)$ has equal coordinates.
This explains why $\scBpc(O,r)$ is symmetric with respect to $y=x$ for all radii $r\ge 0$.
Likewise, the more general balls whose centers are sets of lattice points that are
symmetric with respect to the first diagonal
maintain their symmetry for all $r\ge 0$ (see Figure~\ref{FigureSymmetries} 
for two such examples).
\begin{figure}[htb]
 \centering
 \hfill
    \includegraphics[height=\textwidth,angle=-90]{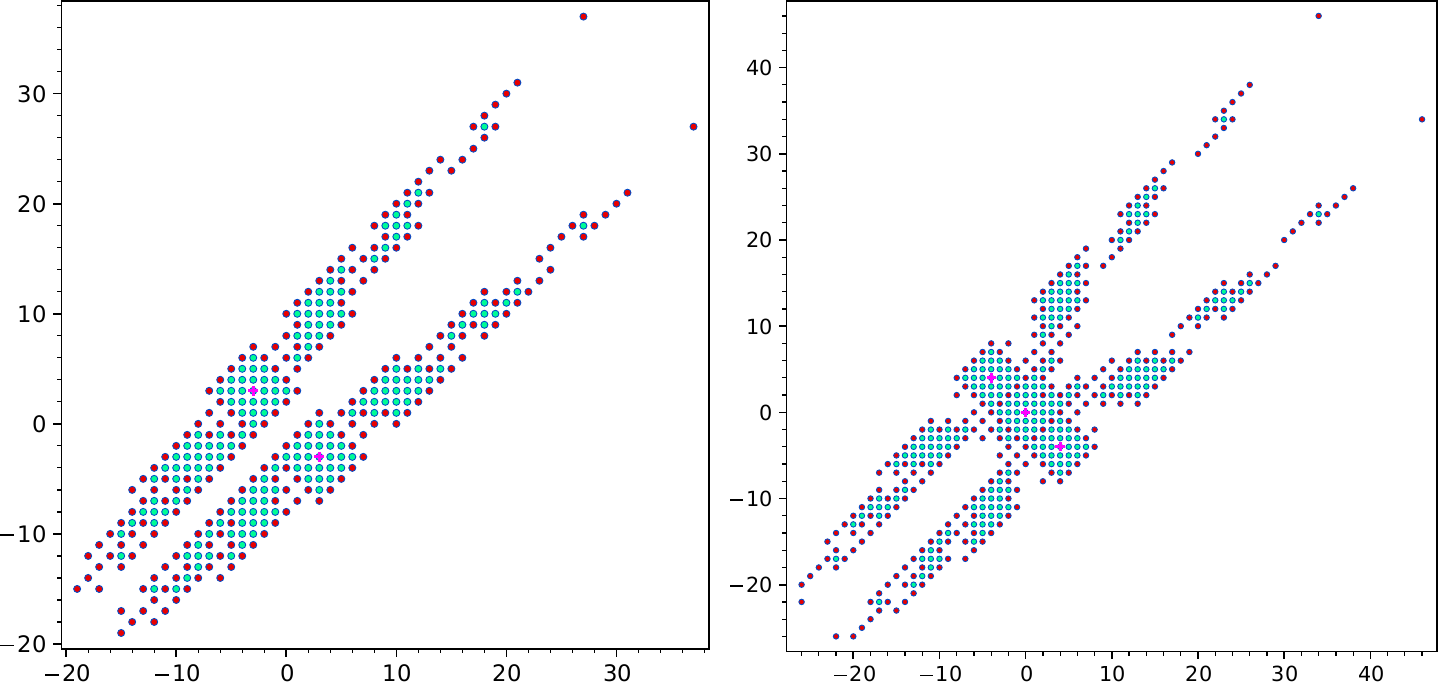}
    \hfill\mbox{}\\
\caption{The set of lattice points situated at distance at most $4$ from the points in the set
$\cC = \big\{(3,-3), (-3,3)\big\}$ (left)
and
$\cC = \big\{(0,0),(4,-4), (-4,4)\big\}$ (right). 
}
\label{FigureSymmetries}
 \end{figure}

However, the projections onto the first coordinate of 
the cross-sections made in $\scBpc(O,r)$ are not identical 
for $c<0$ and $c>0$, and although the formulas
for $\Sa(c,r)$ and $\Sm(c,r)$ in the two cases 
can be derived from one another, 
we have chosen to keep them all in \cref{Theorem0} for completeness.
In the proofs that follow in the next sections, we present the arguments for one of the cases $c \geq 0$ or $c \leq 0$, or just for a specific value of $c$, with the goal of concluding a corresponding result for the value $-c$, too. 
Then, symmetry, or the same framework of proof described, can be used to deduce the given result for all $c$ under consideration.

\section{Preparatory lemmas and the proof of Theorem~\ref{Theorem0} Part~\ref{Theorem0:I}
and Part~\ref{Theorem0:II}\ref{Theorem0:IId}}
Our first result shows that the ball 
$\scBpc(O,r)$
is contained between the lines with equations $y = x+|r|$, for every $r\ge 0$, as stated in~\cref{Theorem0}.
\label{preplemmas}
\begin{lemma}\label{lemma1}
Let $r\ge 0$ be an integer.
    If $|c| > r$, then $\cF(r,c) = \emptyset$. 
\end{lemma}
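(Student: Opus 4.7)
The plan is to track the quantity $y-x$ (equivalently, the $y$-intercept $c$ of the line $\scL(c)$ through a given point) under the action of each operator in $\cL \cup \cM$, and show it can move by at most $1$ per step.

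Step 1: Compute the effect of each operator on the difference $y-x$. A direct substitution gives
\begin{align*}
 L'(x,y)=(-x+2y+1,y) &\implies (y-x) \mapsto -(y-x)-1,\\
 L''(x,y)=(x,2x-y+1) &\implies (y-x) \mapsto -(y-x)+1,
\end{align*}
while the four taxicab operators $M',M'',M''',M^{iv}$ each change $y-x$ by $\pm 1$.

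Step 2: In every case $|y-x|$ changes by at most $1$. For the taxicab operators this is immediate. For $L'$, if $t:=y-x$ then the new value is $-t-1$, so $||-t-1|-|t||\le 1$ by the reverse triangle inequality; the analogous check works for $L''$.

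Step 3: Induct on the length of a path. If $P_0 = O = (0,0)$ and $P_n = F_1\circ\cdots\circ F_n(O)$ with each $F_i\in\{\mathrm{id}\}\cup\cL\cup\cM$, then writing $P_n=(x_n,y_n)$ we have $|y_0-x_0|=0$ and, by Step~2, $|y_n-x_n|\le |y_{n-1}-x_{n-1}|+1$, hence $|y_n-x_n|\le n$.

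Step 4: Conclude. If $|c|>r$ and $(x,y)\in\scL(c)$, then $|y-x|=|c|>r$, so by Step~3 no path of length $\le r$ starting at $O$ can reach $(x,y)$. Thus $\scL(c)\cap\scBpc(O,r)=\emptyset$, which in particular yields $\cF(r,c)=\partial\scBpc(O,r)\cap\scL(c)=\emptyset$. There is no real obstacle here; the only point that warrants care is the sign-flipping behavior of $L'$ and $L''$, but the reverse triangle inequality handles both uniformly.
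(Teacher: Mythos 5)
Your proof is correct, and it takes a genuinely different route from the paper's. The paper proves Lemma 3.1 by induction on the radius: assuming the claim for all radii up to $R$, it supposes a point of $\partial\scBpc(O,R+1)$ lies on a line $y=x+c$ with $|c|>R+1$, observes that such a point must arise from a point of $\partial\scBpc(O,R)$ by one of the operators, and derives a contradiction through a case analysis on whether the last step was a taxicab move, an application of $L'$, or an application of $L''$ (with subcases on the sign of $c$). You instead isolate the invariant $t=y-x$, compute that each operator in $\cL\cup\cM$ sends $t$ to one of $t\pm 1$, $-t-1$, or $-t+1$, and note that in all cases $|t|$ changes by at most $1$; a trivial induction on the path length then gives $|y_n-x_n|\le n$ for any point reached from $O$ in $n$ steps, which immediately yields the lemma. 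Your argument is shorter, avoids the case analysis entirely, and proves the slightly stronger statement that the whole ball $\scBpc(O,r)$ (not just its boundary) misses $\scL(c)$ for $|c|>r$ --- which is in fact what the paper means when it says the ball is contained between the lines $y=x\pm r$. The one computation that needs care, the sign-flipping effect of $L'$ and $L''$ on $y-x$, is handled correctly via the reverse triangle inequality.
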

\begin{proof}
The statement verifies if $r = 0$ because $\scBpc(O,0)= \{(0,0)\}$. 
If $r=1$, then \mbox{$L'(0,0) = (1,0)$}, and $L''(0,0) = (0,1)$, 
as seen in Figure~\ref{FigureSevenBalls},
and the rest of the points at parabolic-taxicab distance $1$ from the origin are $(-1,0)$ and $(0,-1)$, so the statement is also verified.

Let us assume the statement holds for all radii $r$, $1 \leq r \leq R$. Suppose there exists a point \mbox{$(a,b) \in \partial \scBpc(O,R+1)$} with $b = a+c$ for some $|c| > R + 1$. 
Then, since both \mbox{$|c| + 1 > R$}, and $|c| - 1 > R$,
and the points 
at taxicab distance $1$ from $(a,b)$ 
lie on the lines \mbox{$y = x + (|c| \pm 1)$}, by induction there must exist an integer $a^{*}$ such that $(a^{*},b) \in \partial \scBpc(O,R)$ with 
\mbox{$L'(a^{*},b) = (a,b)$}, or an integer $b^{*}$ such that $(a,b^{*}) \in \partial \scBpc(O,R)$ with $L''(a,b^{*}) = (a,b)$. 

But if the former were true, then $b - a^{*} = a - b - 1 = -(c+1)$. Then, if $c \geq 0$, we have $|b - a^{*}| > R+2$, and if $c < 0$, then $|b - a^{*}| > R$, contradicting the inductive assumption. Similarly, if the latter were true, we obtain $b^{*} - a = a - b + 1 = -(c-1)$ and so $|b^{*} - a| > R$, which is once again a contradiction, thus concluding the proof of Lemma~\ref{lemma1}.
\end{proof}

\begin{figure}[htb]
 \centering
 \hfill
     \includegraphics[height=\textwidth,angle=-90]{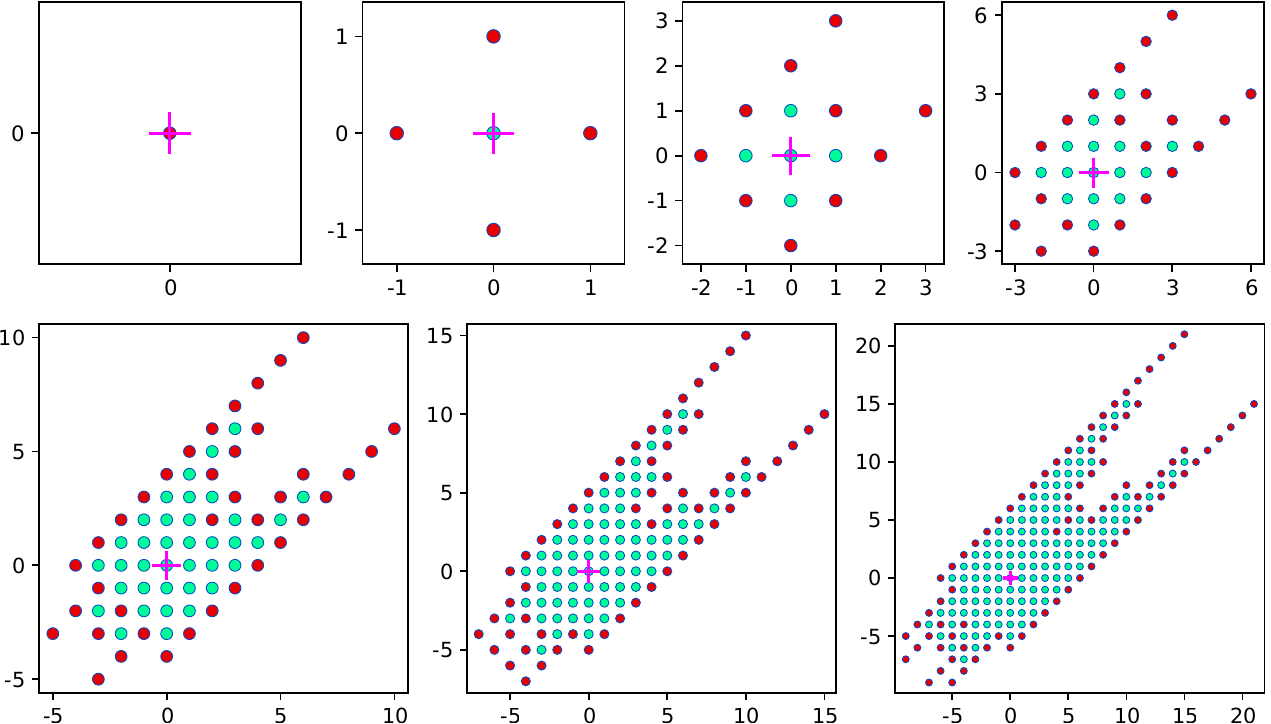}
    \hfill\mbox{}\\	 
\caption{The smallest seven balls with highlighted centers and boundaries.
Notice the developing pulsatory effect that is becoming visible here, mainly along the diagonal $y=x$ as the radius increases.}
\label{FigureSevenBalls}
 \end{figure}
\begin{lemma}\label{lemma2}
Let $r\ge 0$ be an integer.
If $|c| < r$ and $c \not\equiv r \pmod 2$, then $\cF(r,c) = \emptyset$.
\end{lemma}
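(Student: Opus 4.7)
The proof of Lemma~\ref{lemma2} reduces to a clean parity invariant, so the plan is to isolate this invariant and then apply it. The quantity to track is $y-x$, i.e.\ the value $c$ of the line $\scL(c)$ on which a given lattice point sits.

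First I will compute how each operator in $\cL\cup\cM$ affects $y-x$. A direct calculation gives:
\begin{align*}
L'(x,y)&=(-x+2y+1,y): \quad y-x \longmapsto -(y-x)-1,\\
L''(x,y)&=(x,2x-y+1): \quad y-x \longmapsto -(y-x)+1,\\
M'(x,y)&=(x+1,y): \quad y-x \longmapsto (y-x)-1,\\
M''(x,y)&=(x-1,y): \quad y-x \longmapsto (y-x)+1,\\
M'''(x,y)&=(x,y+1): \quad y-x \longmapsto (y-x)+1,\\
M^{iv}(x,y)&=(x,y-1): \quad y-x \longmapsto (y-x)-1.
\end{align*}
In each case the parity of $y-x$ flips: the two $\cL$-operators send $c$ to $\pm c \pm 1$, and every $\cM$-operator shifts $c$ by $\pm 1$.

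Next I would formalize the consequence. Starting from $O=(0,0)$ we have $y-x=0$, so after $n$ applications of operators from $\cL\cup\cM$, the reached point lies on $\scL(c')$ with $c'\equiv n\pmod 2$. Now suppose $(a,b)\in\cF(r,c)$. By definition of $\scBpc(O,r)$ and $\partial\scBpc(O,r)$, we have $\dP(O,(a,b))=r$, so there exist operators $F_1,\dots,F_r\in\cL\cup\cM$ with $F_1\circ\cdots\circ F_r(O)=(a,b)$. Applying the parity-flipping observation $r$ times shows $b-a\equiv r\pmod 2$. Since $(a,b)\in\scL(c)$ forces $b-a=c$, this forces $c\equiv r\pmod 2$, contrary to the hypothesis. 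Hence $\cF(r,c)=\emptyset$.

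The argument has no real obstacle: the only thing to be careful about is that the identity operator does not flip parity, but the definition of $\dP$ produces an actual path consisting of operators drawn from $\cL\cup\cM$ of length exactly $r$ (one simply discards any identity entries and, if needed, notes that $\dP(O,(a,b))=r$ already guarantees the existence of a non-identity path of length $r$, since a path of length $<r$ would contradict minimality while a path using identities can be compressed). I would also remark that the same argument in fact covers the case $|c|>r$ handled by Lemma~\ref{lemma1} under the parity hypothesis, so together Lemmas~\ref{lemma1} and~\ref{lemma2} establish Part~\ref{Theorem0:I} of Theorem~\ref{Theorem0}.
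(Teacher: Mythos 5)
Your proof is correct, and it takes a genuinely different route from the paper. The paper argues by induction on $r$: assuming the claim for all radii up to $R$, it supposes a point of $\cF(R+1,c)$ exists with $c\not\equiv R+1\pmod 2$, rules out taxicab predecessors by the inductive hypothesis, and then derives a contradiction from the two possible $\cL$-predecessors, invoking Lemma~\ref{lemma1} along the way. You instead isolate the invariant that every operator in $\cL\cup\cM$ flips the parity of $y-x$ (your six computations check out), note that a minimal-length witnessing sequence for $\dP(O,(a,b))=r$ contains no identity operators (else it could be shortened), and conclude $b-a\equiv r\pmod 2$ directly. This is shorter, does not depend on Lemma~\ref{lemma1}, and actually proves the stronger statement that $\cF(r,c)=\emptyset$ whenever $c\not\equiv r\pmod 2$, with no hypothesis on $|c|$ at all; the paper's hypothesis $|c|<r$ is never used in your argument. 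Your closing remark should be read with care: the parity invariant only disposes of the portion of Part~\ref{Theorem0:I} where $c\not\equiv r\pmod 2$, while the case $|c|>r$ with $c\equiv r\pmod 2$ still requires Lemma~\ref{lemma1} (or a companion invariant, namely that each operator changes $|y-x|$ by at most $1$). The main thing the paper's approach buys is uniformity: the same predecessor-analysis template recurs throughout the proofs of the harder parts of Theorem~\ref{Theorem0}, whereas your invariant is special to this parity statement.
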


\begin{proof}
This is vacuous for $r = 0$, and evident for $r = 1$ since the points 
in the ball of radius~$1$ centered at $O$ in the parabolic-taxicab distance lie on the lines $y = x \pm 1$. Suppose the statement holds for $1 \leq r \leq R$. If there is a point $(a,b) \in \cF(R+1,c)$ such that $|c| < R + 1$ and $c \not\equiv R + 1 \pmod 2$, then since the four points at taxicab distance $1$ from $(a,b)$ lie on the lines $y = x + (c \pm 1)$ and $c \pm 1 \not\equiv R \pmod 2$, these points cannot be members of $\partial \scBpc(O,R)$ by induction. 
    
Thus, $L''(a,b^{*}) = (a,b)$ for some $(a,b^{*}) \in \partial \scBpc(O, R)$, or $L'(a^{*},b) = (a,b)$ for some $(a^{*},b) \in \partial \scBpc(O,R)$. 

In the former case, we have $b^{*}-a = a-b +1 = -(c-1) \not\equiv c \pmod 2$. However, this is precluded by induction if $|-(c-1)| < R$, since $c \equiv R \pmod 2$. If $|-(c-1)| \geq R$, since $\cF(R,c') = \emptyset$ for $|c'| > R$, we have $|-(c-1)| = R$, contradicting $c \not\equiv R + 1 \pmod 2$. 

Similarly, in the latter case, we have $b-a^{*} = a-b-1 = -(c+1) \not\equiv c \pmod 2$. This is impossible if $|-(c+1)| < R$ by induction, since $c \equiv R \pmod 2$. Otherwise, we obtain $|-(c+1)| = R$, 
which contradicts $c \not\equiv R + 1 \pmod 2$. 

This concludes the proof of Lemma~\ref{lemma2}.
\end{proof}
Note that ~\cref{lemma1,lemma2} prove ~\cref{Theorem0} Part~\ref{Theorem0:I}

\begin{lemma}\label{lemma3}
For any integer $r\ge 0$, we have:
\begin{align*}
   \cF(r,r) \setminus \Qu  &= \big(\scL(r) \cap \Qd\big) \setminus \{(0,r)\},\\
\intertext{and}
   \cF(r,-r) \setminus \Qu &= \big(\scL(-r) \cap \Qp\big) 
             \setminus \{(0,-r)\}.
\end{align*}
\end{lemma}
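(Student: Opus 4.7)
The plan is to prove both displayed identities simultaneously by strong induction on $r$, using the reflection symmetry across the diagonal $y = x$ discussed earlier to reduce the second identity to the first at each stage. Throughout, we will invoke the immediate corollary of Lemma~\ref{lemma1}: for any lattice point $P$ lying on $\scL(c)$, one has $\dP(O, P) \ge |c|$ (indeed, if $\dP(O, P) = d$ then $P \in \cF(d, c)$, forcing $|c| \le d$).

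For the inclusion $(\scL(r) \cap \Qd) \setminus \{(0, r)\} \subseteq \cF(r, r)$, which requires no induction, I would exhibit a length-$r$ path from $O$ to each candidate. Fix $P = (-k, r - k)$ with $1 \le k \le r$. The concatenation of $k$ applications of $M''$ followed by $r - k$ applications of $M'''$, routing through $(-k, 0)$, reaches $P$ in exactly $r$ steps and gives $\dP(O, P) \le r$. The matching lower bound $\dP(O, P) \ge r$ is immediate from the corollary above, so equality holds and $P \in \cF(r, r)$.

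For the reverse inclusion, suppose towards a contradiction that $P = (-k, r - k) \in \cF(r, r)$ with $k > r$, assuming the lemma at radius $r - 1$ on both lines $\scL(\pm(r - 1))$. Since $\dP(O, P) = r$, any shortest path from $O$ to $P$ ends at some immediate predecessor $P'$, i.e.\ $F(P') = P$ for some $F \in \cL \cup \cM$, with $\dP(O, P') = r - 1$. Using that $L'$ and $L''$ are involutions, one enumerates the six candidates explicitly. Three of them---those produced by $M'$, $M^{iv}$, and $L'$---lie on $\scL(r+1)$ or $\scL(-r-1)$ and are ruled out at once by the corollary applied at radius $r - 1$. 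The remaining three candidates are
\begin{align*}
  P'_1 &= (-k + 1,\, r - k) \in \scL(r - 1),\\
  P'_2 &= (-k,\, r - k - 1) \in \scL(r - 1),\\
  P'_3 &= (-k,\, 1 - k - r) \in \scL(-(r - 1)),
\end{align*}
arising from $M''$, $M'''$, and $L''$ respectively. When $k > r$, each of these has a strictly negative first coordinate, so none lies in $\Qu$. The inductive hypothesis then forces $-k + 1 = -j$ for some $1 \le j \le r - 1$ in the first case, $-k = -j$ for some $1 \le j \le r - 1$ in the second, and---via the mirror statement on $\scL(-(r-1))$---a strictly positive first coordinate for $P'_3$ in the third. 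All three conclusions contradict the standing assumption $k > r$, completing the induction.

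The main obstacle will be the $L''$-predecessor case $P'_3$, since it is the only one of the three surviving candidates that jumps from $\scL(r)$ to the mirror line $\scL(-(r - 1))$ rather than staying on $\scL(r - 1)$. This is precisely why the induction must carry both equations of Lemma~\ref{lemma3} in parallel rather than in sequence; once this coupling is in place, the reflection symmetry across $y = x$ already invoked in the paper yields the second identity without further work.
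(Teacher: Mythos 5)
Your argument is sound and, for the forward inclusion, genuinely simpler than the paper's. The paper establishes $\big(\scL(R+1)\cap\Qd\big)\setminus\{(0,R+1)\}\subseteq\cF(R+1,R+1)$ inductively, by tracking which points of $\cF(R,R)\setminus\Qu$ ``contribute'' neighbours at the next radius and then matching cardinalities against $|\scL(R+1)\cap\Qd|$; your direct construction (walk $k$ steps with $M''$ to $(-k,0)$, then $r-k$ steps with $M'''$, and pair the resulting upper bound $\dP(O,P)\le r$ with the lower bound $\dP(O,P)\ge|c|$ extracted from Lemma~\ref{lemma1}) replaces that bookkeeping with two lines and needs no induction at all. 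Your reverse inclusion --- enumerating the six possible predecessors of a hypothetical point with first coordinate $<-r$, discarding those coming from $M'$, $M^{iv}$, $L'$ because they sit on $\scL(\pm(r+1))$, and killing the remaining three with the inductive hypothesis on $\scL(r-1)$ and $\scL(-(r-1))$ --- is essentially the paper's own argument. The base case $r=0$ (both sides empty) should be stated, but that is cosmetic.

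The one point you should not leave to ``symmetry yields the second identity without further work'' is the second identity itself. Reflection in $y=x$ carries $\cF(r,r)\setminus\Qu$ to $\cF(r,-r)\setminus\Qu$ and carries $\big(\scL(r)\cap\Qd\big)\setminus\{(0,r)\}$ to $\big(\scL(-r)\cap\Qp\big)\setminus\{(r,0)\}$: the excluded point reflects to $(r,0)$, not to $(0,-r)$. For $r\ge 1$ these are different sets; at $r=1$ one computes $\cF(1,-1)\setminus\Qu=\{(0,-1)\}$, which equals $\big(\scL(-1)\cap\Qp\big)\setminus\{(1,0)\}$ and not $\big(\scL(-1)\cap\Qp\big)\setminus\{(0,-1)\}=\{(1,0)\}$. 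So what your symmetry argument actually proves is the version with $(r,0)$ removed --- which is the correct one; the displayed statement (and the corresponding line at the end of the paper's proof) misprints the excluded point. Since your use of the $\scL(-(r-1))$ case in ruling out the $L''$-predecessor only needs ``first coordinate $\ge 0$,'' nothing downstream in your induction breaks, but you should say explicitly which version of the second identity your reflection delivers and flag the discrepancy with the printed one rather than asserting they coincide.
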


\begin{proof}
 When $r = 0$ this is clear since $\cF(0,0) \setminus \Qu = \emptyset$ 
 and the only point in $\scL(0)$ that lies in $\Qd$ (resp. $\Qp$) is $O$. 
 For $|r| = 1$, the statements are true since $\partial \scBpc(O,1) = \{(0,\pm 1), (\pm 1, 0)\}$. 
 Suppose the the statements hold for $1 \leq |r| \leq R$. 

Each point $(a,a+R)$ in $\cF(R,R) \setminus \Qu$ contributes a unique point $(a-1,a+R)$ to $\cF(R+1,R+1) \setminus \Qu$. Moreover, $(a,a+R+1) = ((a+1)-1,a+R+1)$ and the point $(a+1,a+R+1) \in \cF(R,R) \setminus \Qu$ as long as $a < -1$. Since $(0,R) \in \Qu$, we obtain that $(-1,R-1)$ is the only point in $\cF(R,R) \setminus \Qu$ that contributes two points to $\cF(R+1,R+1) \setminus \Qu$, namely $(-2,R-1)$ and $(-1,R)$.

Thus, 
\begin{align*}
   |\cF(R+1,R+1) \setminus \Qu| \geq |\cF(R,R) \setminus \Qu| + 1. 
\end{align*}
Note that
\begin{align*}
    |\scL(R+1) \cap \Qd|-1 
    = |\scL(R) \cap \Qd| = |\cF(R,R) \setminus \Qu| + 1,
\end{align*}
by induction, and each contributed point above is a member of $\Qd$, lies on the line \mbox{$y = x + (R + 1)$}, and has strictly negative first coordinate (that is, each such point is in the set $(\scL(R+1) \cap \Qd) \setminus \{(0,R+1)\}$). 

We conclude that 
\begin{align*}
    \cF(R+1,R+1) \setminus \Qu \supseteq \scL(R+1) \cap \Qd \setminus \{(0,R+1)\}.
\end{align*}
If there was another point $(a,b)$ in $\cF(R+1,R+1) \setminus \Qu$, it follows that $a < -(R+1)$ and $b < 0$, and so by induction $(a+1,b), (a,b-1) \not\in \cF(R,R)$. 
Since $\cF(R,c) = \emptyset$ for $|c| > R$, the other two points at taxicab distance $1$ from $(a,b)$ are not in $\partial \scBpc(O,R)$, so it must be the case that $(a,b) = L'(a^{*},b)$ for some $a^{*} \in \ZZ$ such that $(a^{*},b) \in \partial \scBpc(O,R)$, or $(a,b) = L''(a, b^{*})$ for some $b^{*} \in \ZZ$ such that $(a,b^{*}) \in \partial \scBpc(O,R)$. But then either $a^{*} = 2b - a + 1$ or $b^{*} = 2a - b + 1$. 

In the former case, we have $b - a^{*} = a - b - 1 = -(R + 1) - 1 = -R - 2$, which is impossible since $\cF(R,-R-2) = \emptyset$. 

In the latter case, $b^{*} - a = a - b + 1 = -(R+1) + 1 = -R$: this is also impossible since induction then yields that $(a,b^{*}) \in \Qp$. 

By symmetry or adapting the above argument from this proof,
we also obtain
\begin{equation*}
  \cF(R+1,-(R+1)) \setminus \Qu 
  = \big(\scL(-(R+1)) \cap \Qp\big) \setminus \{(0,-(R+1))\},
\end{equation*}    
thus concluding the proof of Lemma~\ref{lemma3}.
\end{proof}

\begin{lemma}\label{lemma4}
For every integer $r\ge 0$, we have:
\begin{align*}
  \max \Sa(r,r) &= \binom[0.81]{r}{2},\\
\intertext{and}
  \max \Sa(r,-r) &= \binom[0.81]{r+1}{2}.
\end{align*}
Moreover, the following statements hold:
\begin{enumerate}
  \item[\namedlabel{Lemma41}{\normalfont{(1)}}]
    If $(x,x+r) \in \scL(r) \cap \Qu$ and $x \leq \binom{r}{2}$,
            then $(x,x+r) \in \cF(r,r)$;
  \item[\namedlabel{Lemma42}{\normalfont{(2)}}]
    If $(x,x-r) \in \scL(-r) \cap \Qu$ and $x \leq \binom{r+1}{2}$, 
    then $(x,x-r) \in \cF(r,-r)$.
\end{enumerate}
\end{lemma}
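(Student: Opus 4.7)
My plan is to prove Lemma~\ref{lemma4} by simultaneous strong induction on $r$, establishing at each level both maximum formulas together with the achievability statements \ref{Lemma41} and \ref{Lemma42}. The base cases $r=0,1$ follow by direct inspection of $\scBpc(O,0)=\{O\}$ and $\partial\scBpc(O,1)=\{(\pm 1,0),(0,\pm 1)\}$. In the inductive step I would first invoke the $y=x$ reflective symmetry discussed earlier in the Notes, under which $(x,x+r)\leftrightarrow(x+r,x)$ sends $\scL(r)$ to $\scL(-r)$ isometrically for $\dP$. Since $\binom{r}{2}+r=\binom{r+1}{2}$, this symmetry identifies the two $\scL(-r)$ statements with the two $\scL(r)$ statements, so it suffices to prove $\max\Sa(r,r)=\binom{r}{2}$ together with claim~\ref{Lemma41}.

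To bound $\max\Sa(r,r)\le\binom{r}{2}$, I would assume $a>\binom{r}{2}$ with $(a,a+r)\in\cF(r,r)$ and rule out every candidate predecessor at distance $r-1$. The $L'$-predecessor $(a+2r+1,a+r)$ lies on $\scL(-r-1)$ and is excluded by Lemma~\ref{lemma1}. The taxicab predecessors $(a+1,a+r)$ and $(a,a+r-1)$ lie on $\scL(r-1)\cap\Qu$ with first coordinates $a+1,a>\binom{r-1}{2}$, contradicting the inductive max formula on $\scL(r-1)$ at level $r-1$. The $L''$-predecessor $(a,a-r+1)$ lies on $\scL(-(r-1))$ with first coordinate $a>\binom{r}{2}=\max\Sa(r-1,-(r-1))$ by induction, again a contradiction. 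With every option eliminated, the bound follows.

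For the achievability in claim~\ref{Lemma41} (which also delivers the matching lower bound on the max), I would exhibit an explicit length-$r$ path from $O$ to each target $(a,a+r)$ with $0\le a\le\binom{r}{2}$, splitting into two regimes. When $0\le a\le\binom{r-1}{2}$, the inductive hypothesis \ref{Lemma41} at level $r-1$ places $(a,a+r-1)$ in $\cF(r-1,r-1)$, and applying $M'''$ reaches $(a,a+r)$ in one additional step. When $\binom{r-1}{2}<a\le\binom{r}{2}$, a short arithmetic check gives $a\ge r-1$, so the inductive hypothesis \ref{Lemma42} at level $r-1$ places $(a,a-r+1)$ in $\cF(r-1,-(r-1))\cap\Qu$, and applying $L''$ sends this to $(a,a+r)$. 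In both regimes Lemma~\ref{lemma1} forces $\dP(O,(a,a+r))\ge r$, so equality holds.

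The principal subtlety will be sequencing the joint induction correctly: the upper bound on $\scL(r)$ at level $r$ consumes the max formulas at level $r-1$ on \emph{both} $\scL(\pm(r-1))$, while the achievability on $\scL(r)$ consumes \emph{both} parts \ref{Lemma41} and \ref{Lemma42} at level $r-1$. Once the induction is ordered so that each ingredient invoked has been verified at a strictly smaller radius, the symmetry step transports every conclusion from $\scL(r)$ to $\scL(-r)$, completing the proof.
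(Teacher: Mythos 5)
Your proposal is correct and follows essentially the same route as the paper: induction on $r$, eliminating all candidate predecessors at radius $r-1$ via Lemma~\ref{lemma1} and the inductive max formulas for the upper bound, realizing each admissible abscissa by applying $L''$ (or a taxicab step) to a boundary point at radius $r-1$ for achievability, and transporting the $\scL(-r)$ statements by the $y=x$ symmetry. (Just note explicitly that the two remaining taxicab predecessors $(a-1,a+r)$ and $(a,a+r+1)$ lie on $\scL(r+1)$ and are likewise excluded by Lemma~\ref{lemma1}; your two-regime split in the achievability step is, if anything, slightly more careful than the paper's single $L''$ mechanism about the $\Qu$ hypothesis in the inductive appeal to \ref{Lemma42}.)
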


\begin{proof}
The two claims in the first sentence of the lemma hold for $r \leq 1$. 
They also hold for~$r = 2$, since the ball of radius $2$ centered at $O$ in the parabolic-taxicab distance is the ball of radius $2$ in the taxicab distance along with the points $(1,3)$ and $(3,1)$,
as seen in Figure~\ref{FigureSevenBalls}. 

Suppose the two statements hold for all $2 \leq r \leq R$. 
Then
\begin{align*}
  \Big(\binom[0.69]{R+1}{2}, 
      \binom[0.69]{R+2}{2}\Big) \in \scL(R+1),\ 
    \text{ and, moreover,}\ 
  \Big(\binom[0.69]{R+1}{2}, 
      \binom[0.69]{R+2}{2}\Big) \in \cF(R+1,R+1),
\end{align*}
since, by induction,
\begin{align*}
  \Big(\binom[0.69]{R+1}{2},\binom[0.69]{R}{2}\Big) \in \cF(R,-R) ,
\end{align*}
and by definition of $L''$,
\begin{align*}
  L''\Big(\binom[0.69]{R+1}{2}, \binom[0.69]{R}{2}\Big)
  = \Big(\binom[0.69]{R+1}{2}, \binom[0.69]{R+2}{2}\Big).
\end{align*}

Suppose there is a point $(a,a+R+1) \in \cF(R+1,R+1)$ with $a > \binom{R+1}{2}$. Note that no point at taxicab distance $1$ from $(a,a+R+1)$ is in $\partial \scBpc(O, R)$ by induction and \cref{lemma1} (no point in $\partial \scBpc(O,R)$ lies on the line $y = x+R+1$). Since $L'$ and $L''$ are involutions, it follows that $L''(a,a+R+1) \in \partial \scBpc(O, R)$ or $L'(a,a+R+1) \in \partial \scBpc(O, R)$. However, both possibilities are precluded by induction and \cref{lemma1} (no point in $\partial \scBpc(O, R)$ lies on the line $y = x-R-1$). 

By a slight modification of the argument in the last paragraph, 
we can also show that there can be no point in $\cF(R+1,-(R+1))$ with first coordinate larger than $\binom{R+2}{2}$. 

We now prove statement~\ref{Lemma41} in the second half of the lemma. (Then,
the proof can be simply adapted to prove statement~\ref{Lemma42}, or
alternatively,~\ref{Lemma42} can be deduced by symmetry.)

The statement holds for $r \leq 1$, as well as for $r = 2$, by the explicit description of $\partial \scBpc(O,2)$ given above. 
Suppose the statement holds for all $2 \leq r \leq R$. 
Then, if \mbox{$(a,a+R+1) \in \scL(R+1)$} with $a \leq \binom{R+1}{2}$, we have $L''(a,a+R+1) \in \scL(-R)$. Since $L''$ preserves the first coordinate, induction yields that $L''(a,a+R+1) \in \cF(R,-R)$, whence $(a,a+R+1) \in \cF(R+1,R+1)$, because $L''$ is an involution. 

This concludes the proof of Lemma~\ref{lemma4}.
\end{proof}

By \cref{lemma3,lemma4}, \cref{Theorem0} Part~\ref{Theorem0:II}\ref{Theorem0:IId} follows.

\section{Proof of Theorem~\ref{Theorem0} Part ~\ref{Theorem0:II}\ref{Theorem0:IIa}
}
Since the arguments in the proof are similar for the $y$-intercept $c$ 
in the cases $c\geq 0$ and $c<0$, we will only analyze the former case. 
We also mention that the results can be obtained from one another, by symmetry.

The statement is vacuous for $r \leq 1$, 
and is directly verified for $r = 2$. 
Suppose it holds for $2 \leq r \leq R$. 
Note first that 
\begin{align*}
  \left[\binom[0.81]{R-1}{2}+1,\binom[0.81]{R}{2}+1\right] \cap \ZZ \subseteq \Sa(R+1,R-1) \cap \ZZp,  
\end{align*}
since, by \cref{lemma4,lemma1}, 
\begin{align*}
  \Sa(R-1,R-1) \cap \ZZp &= \left[0,\binom[0.81]{R-1}{2}\right] \cap \ZZ,\\
  \Sa(R,R) \cap \ZZp &= \left[0,\binom[0.81]{R}{2}\right] \cap \ZZ,\\
  \intertext{and}
 \Sa(R',R-1) &= \emptyset ,\ \text{ for $R' < R-1$.}
\end{align*}

If there was a point $(a,a+R-1) \in \cF(R+1,R-1)$ with $a > \binom{R}{2} + 1$, induction yields
\begin{align*}
  \Sa(R,R-2) \cap \ZZp = 
  \left[\binom[0.81]{R-2}{2}+1,\binom[0.81]{R-1}{2}+1\right] \cap \ZZ.  
\end{align*}
Therefore, it follows that $L'(a^{*},a+R-1) = (a,a+R-1)$ for some $a^{*} \in \ZZ$ such that $(a^{*},a+R-1) \in \partial \scBpc(O,R)$, or $L''(a,b^{*}) = (a,a+R-1)$ for some $b^{*} \in \ZZ$ such that $(a,b^{*}) \in \partial \scBpc(O,R)$. 

In the former case, we have $a = 2(a+R-1)-a^{*}+1$, and hence $(a+R-1)-a^{*} = a - (a + R - 1) - 1 = -R$. But then $a^{*} = a + 2R - 1 > \binom{R}{2} + 2R = \binom{R+1}{2} + R$, which contradicts the fact that 
\begin{align*}
  \Sa(R,-R) \cap \ZZp = \left[0,\binom[0.81]{R+1}{2}\right] \cap \ZZ . 
\end{align*}

In the latter case, we have $a + R - 1 = 2a - b^{*} + 1$, 
so that $b^{*} - a = -(R-2)$. But this is impossible since 
\begin{align*}
  \Sa(R,-(R-2)) \cap \ZZp 
  = \left[\binom[0.81]{R-1}{2}+1,\binom[0.81]{R}{2}\right] \cap \ZZ,
\end{align*}
by induction, provided $b^{*} \geq 0$. If $b^{*} < 0$, we obtain $\binom{R}{2} + 1 < a < R - 2$, which is a contradiction. 

The conclusion is that 
\begin{align*}
  \Sa(R+1,R-1) \cap \ZZp 
  = \left[\binom[0.81]{R-1}{2}+1,\binom[0.81]{R}{2}+1\right] \cap \ZZ.  
\end{align*}

Now, fix a non-negative $y$-intercept $0 \leq c < R - 1$ with $c \equiv R + 1 \pmod 2$, 
and write $c = R + 1 - 2k$. 
We can check that 
\begin{align*}
   \left[\binom[0.81]{R-k}{2} + k, \binom[0.81]{R+1-k}{2} + k\right] 
   \cap \ZZ \subseteq \Sa(R+1,c) \cap \ZZp, 
\end{align*}
since, by induction,
\begin{align*}
  \Sa(R-1,c) \cap \ZZp &= \left[\binom[0.81]{R-2-(k-1)}{2} + k-1,
     \binom[0.81]{R-1-(k-1)}{2} + k-1\right] \cap \ZZ,\\
\intertext{and}
  \Sa(R,c+1) \cap \ZZp& = \left[\binom[0.81]{R-1-(k-1)}{2}+k-1,
    \binom[0.81]{R-(k-1)}{2}+k-1\right].
\end{align*}
Note that induction also yields:
\begin{align*}
  &\quad\left[0, \binom[0.81]{R-k}{2} + k - 1\right] \cap \ZZ\\
  &= \left(\left[0,\binom[0.81]{R-2k+1}{2}\right] \cap \ZZ\right) 
  \sqcup
   \bigsqcup_{1 \leq j \leq k-1} 
    \left(\left[\binom[0.81]{R-k-j}{2} + k - j,
        \binom[0.81]{R-k-j+1}{2} + k - j\right] \cap \ZZ\right)\\
  &= \bigsqcup_{1 \leq j \leq k} 
  \big(\Sa(R+1-2j,c) \cap \ZZp\big).
\end{align*}

A consequence is that 
\begin{align*}
  \min\big( \Sa(R+1,c) \cap \ZZp\big) 
  = \binom[0.81]{R-k}{2} + k.  
\end{align*}
Suppose $(a, a + c) \in \cF(R+1,c)$ with $a > \binom{R+1-k}{2} + k$. We observe that
\begin{align*}
  \Sa(R,c-1) \cap \ZZp 
  = \left[\binom[0.81]{R-1-k}{2}+k,
       \binom[0.81]{R-k}{2}+k\right] \cap \ZZ ,\
       \text{ if $c \geq 1$, }
\end{align*}
and 
\begin{align*}
  \Sa(R,c-1) \cap \ZZp = 
    \left[\binom[0.81]{R-k}{2}+1,
       \binom[0.81]{R-k+1}{2}\right] \cap \ZZ ,\
       \text{ if $c \geq 0$, } 
\end{align*}
by induction. It therefore follows that $L'(a^{*},a+c) = (a,a+c)$ for some $a^{*} \in \ZZ$ such that $(a^{*},a+c) \in \partial \scBpc(O,R)$, or $L''(a,b^{*}) = (a,a+c)$ for some $b^{*}\in\ZZ$ such that 
\mbox{$(a,b^{*}) \in \partial \scBpc(O,R)$}. 

In the former case, we have $2(a+c)-a^{*}+1 = a$, so that $(a+c)-a^{*} = -(c+1)$. But then $a^{*} = a+2c+1 > \binom{R+1-k}{2}+k+2c+1$. By induction, 
\begin{align*}
  \Sa(R,-(c+1)) \cap \ZZp 
  = \left[\binom[0.81]{R-k}{2}+1,
           \binom[0.81]{R-k+1}{2}\right] \cap \ZZ,
\end{align*}
and so we arrive at a contradiction.

In the latter case, we have $2a-b^{*}+1=a+c$,
so that $b^{*}-a = -(c-1)$. But, by induction, 
\begin{align*}
  \Sa\big(R,-(c-1)\big) \cap \ZZp 
     = \left[\binom[0.81]{R-1-k}{2}+k,
        \binom[0.81]{R-k}{2}+k\right] \cap \ZZ,\
       \text{ if $c \geq 1$,  }
\end{align*}
and
\begin{align*}
  \Sa\big(R,-(c-1)\big) \cap \ZZp 
   = \left[\binom[0.81]{R-k}{2}+k-1,
       \binom[0.81]{R-k+1}{2}+k-1\right] \cap \ZZ,\
       \text{ if $c =0$,  }
\end{align*}
both of which are precluded by the inequality $a > \binom{R+1-k}{2} + k$, if $b^{*} \geq 0$. If $b^{*} < 0$, we have that $\binom{R+1-k}{2}+k < a < R-2k$, which contradicts the assumption $k \geq 2$. 

\section{Proof of Theorem~\ref{Theorem0} Parts~\ref{Theorem0:II}\ref{Theorem0:IIb} and \ref{Theorem0:IIc}
}
\subsection{A convenient signed counter function}
For every integer $x \in \ZZ$, we define \begin{equation}\label{epsilonandz}
  \z(x) := \epsilon(x)\big(|x|+1\big),\ \text{ where }\
    \epsilon(x) 
    := \begin{cases}
        -1, &\text{ if } x > 0; \\[4pt]
        1, &\text{ if } x \leq 0.
  \end{cases}
\end{equation}
Let $\z^{(k)}$ denote the $k$-fold composition of $\z$ with itself, 
that is, 
$\z^{(k)} := \underbrace{\z \circ \cdots \circ \z}_{k \text{ times}}$. 

\begin{remark}\label{Remarkz}
 We list some basic properties of $\z$. 
\begin{enumerate}[wide, labelwidth=0pt]
\setlength{\itemsep}{0pt}
    \setlength{\parskip}{8pt}
    \setlength{\parsep}{0pt} 
    \item 
By definition~\eqref{epsilonandz}, $\z(x)\neq 0$ 
for every $x \in \ZZ$, while 
$\z(x) > 0$, if $x \leq 0$, and
$\z(x) < 0$, if $x > 0$.
\item 
For every integer $k\ge 0$, we have
\begin{align}\label{eqzk}
  \z^{(k)}(x) = 
   \begin{cases}
    (-1)^{k}(x+k), & \text{ if $x > 0$; }  \\[8pt]
    (-1)^{k-1}(-x+k),  & \text{ if $x \le 0$. }
   \end{cases}
\end{align}
\begin{proof}
    
If $y \in \ZZ$, note that $(y-|x|) - (y-1-|\z(x)|) = -|x|+1+|x|+1 = 2$. 
Then, it follows that $\z^{(k)}(x) = (-1)^{k}(x+k)$ for every integer 
$k \geq 1$, provided $x > 0$. 
Indeed, for $k = 1$ this follows by definition of $\z$, and assuming 
the equality holds for $1 \leq k \leq K$, we deduce: 
\begin{align*}
  \z^{(K+1)}(x) = \z(\z^{(K)}(x)) 
  = \z\big((-1)^{K}(x+K)\big) = (-1)^{K+1}(x+K+1)  .
\end{align*}
\item
Similarly, when $x \leq 0$, we have $\z^{(k)}(x) = (-1)^{k-1}(-x+k)$.
Indeed, the equation holds in the case $k=1$, and
with the assumption that it 
holds for $1 \leq k \leq K$, it follows that
\begin{align*}
   \z^{(K+1)}(x) = \z\big((-1)^{K-1}(-x+K)\big) = (-1)^{K}(-x+K+1) ,
\end{align*}
which concludes the proof of~\eqref{eqzk}.
\end{proof}
\end{enumerate}

\end{remark}

\subsection{Recursion}\label{recursive}
The sets $\Sm(r,c)$ exhibit the following recursive behavior:
\begin{lemma}\label{lemma7}
Let $r\ge 0$ be an integer. 
Then, for every integer $c$ such that $|c| \leq r - 2$, and $c \equiv r \pmod 2$, we have:
\begin{align*}
\label{recformula}
   \Sm(r,c)  
   = \begin{cases}
     \Sm\big(r-1,\z(c)\big), &\text{ if } c \leq 0;\\[8pt]
     \Sm\big(r-1,\z(c)\big) + \z(c) - c, &\text{ if } c > 0.
   \end{cases}
\end{align*}
\end{lemma}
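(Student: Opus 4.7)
The plan is to realize the stated recursion as the restriction of a specific involution to the negative-first-coordinate portion of the boundary: $L''$ when $c \leq 0$ and $L'$ when $c > 0$. For $c \leq 0$ one checks that $L''(x, x+c) = (x, x + \z(c))$, which preserves the first coordinate and is therefore compatible with the shift-free identity $\Sm(r,c) = \Sm(r-1, \z(c))$. For $c > 0$ the analogue is $L'(x, x+c) = (x + (2c+1), x+c) = (x - (\z(c) - c), x+c)$, which shifts the first coordinate by $-(\z(c) - c)$; applying the inverse $L'$ to elements of $\Sm(r-1, \z(c))$ produces exactly the $+(\z(c) - c)$ translation appearing in the lemma.

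The argument proceeds by strong induction on $r$, with the base cases $r \leq 3$ handled directly from Figure~\ref{FigureSevenBalls} (since $|c| \leq r-2$ leaves only finitely many configurations). The inductive step splits into two inclusions. For the forward inclusion one takes $(a', b') \in \cF(r-1, \z(c))$ with $a' < 0$, applies $L''$ (respectively $L'$), and argues that the resulting point on $\scL(c)$ has parabolic-taxicab distance exactly $r$ from the origin. The distance is at most $r$ by construction; the parity \cref{lemma2} excludes distance $r-1$ since $c \equiv r \pmod 2$; distances $\leq r - 2$ are then ruled out by combining the induction hypothesis at smaller radii with the explicit interval formulas from Parts \ref{Theorem0:II}\ref{Theorem0:IIa} and \ref{Theorem0:II}\ref{Theorem0:IId}. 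For the reverse inclusion one starts with $(a, a+c) \in \cF(r, c)$, $a < 0$, and enumerates the six candidate predecessors at distance $r-1$: the four taxicab neighbors on $\scL(c \pm 1)$, the $L'$-image on $\scL(-c-1)$, and the $L''$-image on $\scL(-c+1)$ (with the roles of $L'$ and $L''$ interchanged when $c > 0$). Using the inductive characterizations of $\Sa(r-1, c')$ and $\Sm(r-1, c')$ for each nearby value $c' \in \{c \pm 1,\, -c \pm 1\}$, all candidates except the designated one are eliminated, yielding the required containment.

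The main obstacle lies in the reverse inclusion: systematically ruling out the five competing predecessors requires a careful case split on the signs of $c \pm 1$ and $-c \pm 1$, on whether the candidate's first coordinate is positive, zero, or negative, and on whether $|c|$ is strictly less than $r - 2$ or equal to it. The boundary case $|c| = r - 2$ is particularly delicate, since the recursion brings us to the Part~\ref{Theorem0:II}\ref{Theorem0:IId} region where the endpoint behaviour of the interval formulas for $\Sa(r-1, \pm(r-1))$ must be tracked carefully, and where one must also separately verify that the shift $+(\z(c)-c)$ in the $c>0$ case lands negative-coordinate preimages back in $\ZZ_{<0}$.
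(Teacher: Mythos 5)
Your proposal follows essentially the same route as the paper: strong induction on $r$, realizing the recursion through $L''$ for $c\le 0$ and $L'$ for $c>0$, a forward inclusion obtained by pushing points of $\cF(r-1,\z(c))\setminus\Qu$ through the designated involution, and a reverse inclusion that eliminates the competing taxicab and involutive predecessors using the explicit interval formulas unfolded from the induction hypothesis (the paper packages this unfolding as \cref{characterization}). The delicate points you flag --- the boundary case $|c|=r-2$ handled via \cref{lemma3}, and checking that the $c>0$ shift keeps preimages in $\ZZ_{<0}$ --- are exactly where the paper's proof spends its effort, so the plan is sound and matches the published argument.
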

We will prove \cref{lemma7} in \cref{recproof}. Before proving \cref{lemma7}, we establish a useful lemma in the following section, which will condense the inductive argument that we utilize in \cref{recproof}. Informally, assuming that \cref{lemma7} holds for all integers $r \geq 0$ up to a non-negative integer $R$, we may ``unfold'' repeatedly using this assumption to obtain exact formulas for the sets $\Sm(r,c)$, as long as $r$ is a non-negative integer not exceeding $R$. 

\subsection{Exact Formulas}
We prove the following lemma:
\begin{lemma}\label{characterization}
Fix an integer $R \geq 0$. Assume, for every pair of integers $(r,c)$ such that $0 \leq r \leq R$, $|c| \leq r-2$, and $c \equiv r \pmod 2$, that 
\begin{align*}
   \Sm(r,c)  
   = \begin{cases}
     \Sm\big(r-1,\z(c)\big), &\text{ if } c \leq 0;\\[8pt]
     \Sm\big(r-1,\z(c)\big) + \z(c) - c, &\text{ if } c > 0.
   \end{cases}
\end{align*}
Then, if $c \leq 0$, we have: 
\begin{align*}
  \Sm(r,c) 
  = \begin{cases}
    \big([k-r,-1] \cap \ZZ\big) + c(k-1) - \binom[0.81]{k}{2},  
        &\text{ if } k \equiv 1 \pmod 2;\\[8pt]
    \big([0,r-k-1]\cap \ZZ\big) + ck - \binom[0.81]{k+1}{2}, 
        &\text{ if } k \equiv 0 \pmod 2,
  \end{cases}
\end{align*}
       and, if $c > 0$, we have:
\begin{align*}
 \Sm(r,c) 
 = \begin{cases}
    \big([0,r-k-1] \cap \ZZ\big) - c(k+1) - \binom[0.81]{k+1}{2},  
         &\text{ if } k \equiv 1 \pmod 2;\\[8pt]
    \big([k-r,-1] \cap \ZZ\big) - ck - \binom[0.81]{k}{2},
         &\text{ if } k \equiv 0 \pmod 2.
   \end{cases}
\end{align*}
\end{lemma}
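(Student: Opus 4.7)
The plan is to prove the closed-form formulas by induction on $k$, where $|c| = r - 2k$, using the recursion of \cref{lemma7} (assumed in the hypothesis) as the engine and Part~\ref{Theorem0:II}\ref{Theorem0:IId} of \cref{Theorem0} (already established in \cref{lemma4}) as the base case. By the explicit formula for $\z^{(j)}$ in \cref{Remarkz}, each application of $\z$ to $c$ flips the sign and increases $|c|$ by one, so the pair $(r-1, \z(c))$ has parameter $k-1$ and the sign of $c$ opposite to that of the original; iterating $k$ times reaches the boundary pair $(r-k, \pm(r-k))$ handled by Part~\ref{Theorem0:II}\ref{Theorem0:IId}.

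For the base case $k=1$, one application of \cref{lemma7} reduces $\Sm(r,c)$ to $\Sm(r-1,\z(c))$, together with an additional translation by $\z(c)-c$ when $c>0$, and Part~\ref{Theorem0:II}\ref{Theorem0:IId} identifies $\Sm(r-1,\z(c))$ as either the interval $[-(r-1),-1] = [k-r,-1]$ (when $\z(c)>0$) or the empty set (when $\z(c)<0$). In the inductive step $k \ge 2$, the recursion reduces $(r,c)$ to $(r-1,\z(c))$, whose formula with parameter $k-1$ is supplied by the inductive hypothesis. Substituting this formula and simplifying using $\z(c)-c = -2c-1$ for $c>0$, $\z(c)-c = 1-2c$ for $c\le 0$, and the identity $\binom{k+1}{2} = \binom{k}{2} + k$, converts the expression for $(r-1,\z(c))$ into the target expression for $(r,c)$. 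The interval shape $[k-r,-1]$ versus $[0,r-k-1]$ flips between consecutive steps, reflecting the simultaneous change in parity of $k$ and sign of $c$.

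The main obstacle is the bookkeeping across the four subcases determined by the sign of $c$ and the parity of $k$, each of which requires its own algebraic verification. I would streamline the argument by first handling the two subcases in which the base interval from Part~\ref{Theorem0:II}\ref{Theorem0:IId} is directly nonempty (namely $c \le 0$ with $k$ odd, and $c > 0$ with $k$ even), where unfolding the recursion yields $[k-r,-1]$ shifted by the appropriate linear expression. The remaining two subcases, in which the target interval has shape $[0,r-k-1]$, are the most delicate: here the relevant points on the boundary of $\scBpc(O,r)$ arise as $L'$- or $L''$-images of points with non-negative first coordinate on the neighbouring line, and one must track this truncation carefully through the recursion. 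Alternatively, invoking the mirror symmetry of $\scBpc(O,r)$ across the diagonal $y=x$, which exchanges $c$ and $-c$, reduces the $c \le 0$ subcases to the $c > 0$ ones and halves the remaining casework.
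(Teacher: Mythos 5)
Your overall strategy --- unfold the recursion $k$ times (equivalently, induct on $k$), anchor at the extreme line where $|\z^{(k)}(c)|=r-k$, and accumulate the translations using $\z(c)-c$ and the closed form for $\z^{(j)}$ from \cref{Remarkz} --- is the same as the paper's; the paper merely writes the induction as an explicit telescoping chain ending at $\Sm\big(r-k,\z^{(k)}(c)\big)$ and evaluates the accumulated shift as a single sum $\sum_j(\pm)\z^{(j)}(c)$.

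There is, however, a genuine gap in your base case, affecting two of the four subcases. Since $|c|=r-2k$, the chain terminates at $\z^{(k)}(c)=-(r-k)$ precisely when $c>0$ with $k$ odd, or $c\le 0$ with $k$ even. In these subcases you anchor the induction with $\Sm\big(r-k,-(r-k)\big)=\emptyset$, which is indeed what \cref{Theorem0} Part~\ref{Theorem0:II}\ref{Theorem0:IId} gives (the set $\Sa(r-k,-(r-k))=[0,\binom{r-k+1}{2}]$ contains no negative integers). But then the recursion propagates $\emptyset$ all the way back up, yielding $\Sm(r,c)=\emptyset$, which contradicts the nonempty target interval $[0,r-k-1]+\text{shift}$; already for $k=1$, $c=r-2$ the claimed answer is $[3-2r,1-r]$, and Figure~\ref{FigureSaSm} shows $\Sm(9,-5)=[-13,-7]$ ($k=2$), whereas your unfolding would bottom out at $\Sm(7,-7)=\emptyset$. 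The paper anchors these two subcases differently, via \cref{lemma3}: the set that actually feeds into the last application of the recursion is the first-coordinate projection of $\cF\big(r-k,-(r-k)\big)\setminus\Qu$, namely the interval $[0,r-k-1]$ of \emph{non-negative} integers, which only becomes negative after the accumulated translations are applied. You flag exactly these two subcases as ``the most delicate'' and note that a truncation must be tracked, but you do not supply the mechanism; and the proposed symmetry reduction does not dispose of the problem, since the termination at $-(r-k)$ occurs for both signs of $c$. To repair the argument, replace the base value $\emptyset$ by $[0,r-k-1]$, justified by \cref{lemma3}'s description of $\cF(r,-r)\setminus\Qu$ rather than by Part~\ref{Theorem0:II}\ref{Theorem0:IId}.
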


\begin{proof}
First, suppose $c > 0$. 
Observe that 
\begin{align*}
  r'-k - \big|\z^{(k)}(c)\big| = r'-k-(r'-2k+k) = 0.  
\end{align*}
If $k \equiv 1 \pmod 2$, then by \cref{lemma3}, we have
$\Sm\big(r'-k,\z^{(k)}(c)\big) = [0,r'-k-1] \cap \ZZ$, 
and we obtain the following finite recursion by assumption:
\begin{align*}
  \Sm(r',c)  &= \Sm\big(r'-1,\z(c)\big) + \z(c) - c \\
        & = \Sm\big(r'-2,\z^{(2)}(c)\big)+\z(c)-c\\
        &= \cdots \\[-10pt]
        & = \Sm\big(r'-k,\z^{(k)}(c)\big) 
        + \bigg(\sum_{j=1}^{k}(-1)^{j+1}\z^{(j)}(c)\bigg)-c \\[-5pt]
        & = \big([0,r'-k-1] \cap \ZZ\big) 
          - c(k+1) - \binom[0.81]{k+1}{2}.
\end{align*}
If $k \equiv 0 \pmod 2$, then \cref{lemma3} yields 
    $\Sm\big(r'-k,\z^{(k)}(c)\big) = [k-r',-1] \cap \ZZ$, and 
\begin{align*}
    &\Sm(r',c) = \Sm\big(r'-1,\z(c)\big) + \z(c) - c \\
    & = \Sm\big(r'-2,\z^{(2)}(c)\big)+\z(c)-c\\
    & = \cdots \\[-10pt]
    & = \Sm\big(r'-k,\z^{(k)}(c)\big) 
     + \bigg(\sum_{j=1}^{k-1}(-1)^{j+1}\z^{(j)}(c)\bigg)-c \\[-5pt]
    & = \big([k-r',-1]\cap \ZZ\big)-ck-\binom[0.81]{k}{2},
\end{align*}
by assumption.

The case $c \leq 0$ is similar, where we once again repeatedly apply the assumption made in the statement of \cref{characterization}. Indeed, if $k \equiv 1 \pmod 2$, 
then by \cref{lemma3} we obtain 
$\Sm\big(r'-k,\z^{(k)}(c)\big) = [k-r',-1] \cap \ZZ$, and
\begin{align*}
    \Sm(r',c) &= \Sm\big(r'-1,\z(c)\big) \\
    & = \Sm\big(r'-2,\z^{(2)}(c)\big)+\z^{(2)}(c)-\z(c)\\
    & =\cdots \\[-10pt]
    & = \Sm\big(r'-k,\z^{(k)}(c)\big) 
       + \sum_{j=1}^{k-1}(-1)^{j}\z^{(j)}(c)\\[-5pt]
    & = \big([k-r',-1] \cap \ZZ\big) + c(k-1)-\binom[0.81]{k}{2}.
\end{align*}
If $k \equiv 0 \pmod 2$, then by \cref{lemma3}, we have
$\Sm\big(r'-k,\z^{(k)}(c)\big) = [0,r'-k-1]\cap\ZZ$, and
\begin{align*}
   \Sm(r',c) &= \Sm\big(r'-1,\z(c)\big)\\ 
    & = \Sm\big(r'-2,\z^{(2)}(c)\big)+\z^{(2)}(c)-\z(c)=\\
    & = \cdots \\[-10pt]
    & = \Sm\big(r'-k,\z^{(k)}(c)\big) 
        + \sum_{j=1}^{k}(-1)^{j}\z^{(j)}(c) \\[-5pt]
    & = \big([0,r'-k-1]\cap\ZZ\big) + ck - \binom[0.81]{k+1}{2}.
\end{align*}
This completes the proof of \cref{characterization}.
\end{proof} 

\subsection{Proof of Lemma~\ref{lemma7}  and the conclusion of the proof of Theorem~\ref{Theorem0}
}\label{recproof}
\begin{proof}[Proof of \texorpdfstring{\cref{lemma7}}{Lemma 5.1}]
For $r \leq 1$ the statement holds vacuously, and by the first part of the proof of \cref{lemma4}, $\Sm(2,0) = \{-1\} = \Sm(1,1)$. Thus, we may henceforth assume $r \geq 3$. 

Suppose the statement holds for all integers $r'$ such that $2 \leq r' \leq r-1$. We will first consider the case $c = \pm(r-2)$. Note that for each point $(a,a-r+1) \in \cF(r-1,-(r-1)) \setminus \Qu$, we have $L'(a,a-r+1) = (a-2r+3,a-r+1)$, and $a-r+1-(a-2r+3) = r-2$. Thus, $L'(x,x-r+1) \in \cF(r,r-2) \setminus \Qu$ and therefore 
\begin{align*}
  &\ \quad \Sm\big(r-1,\z(r-2)\big) + \z(r-2)-(r-2)\\ 
  &= \Sm\big(r-1,-(r-1)\big) - (r-1)-(r-2)\\
  &= ([0,r-2] \cap \ZZ)-(2r-3)\\ 
  &= \{a-2r+3 : (a,a-r+1) \in \cF(r-1,-(r-1)) \setminus \Qu\}\\ 
  &\subseteq \{a^{*} \in \ZZ : (a^{*},a^{*}+r-2) 
         \in \cF(r,r-2) \setminus \Qu\}\\ 
  &= \Sm(r,r-2),
\end{align*}
by \cref{lemma3}. 

If there was a point $(a,a+r-2) \in \cF(r,r-2) \setminus \Qu$ 
with $a \not\in [3-2r,1-r] \cap \ZZ$, 
since $1-r+(r-2) = -1$, by \cref{lemma3} it follows that $a < 3 - 2r$.
Thus, by \cref{lemma3} and the inductive assumption, there must exist 
$a^{*} \in \ZZ$ such that $(a^{*},a+r-2) \in \partial \scBpc(O,r-1)$ 
with $L'(a^{*},a+r-2) = (a,a+r-2)$, or $b^{*} \in \ZZ$ such that 
$(a,b^{*}) \in \partial \scBpc(O,r-1)$ with $L''(a,b^{*}) = (a,a+r-2)$.

In the former of these two possibilities, it follows that $a^{*} = a+2r-3$. This implies $(a^{*}, a+r-2) \in \cF(r-1,-(r-1))$, contradicting \cref{lemma3}. 

The latter possibility implies that $b^{*} = a-r+3$, and therefore $(a,b^{*}) \in \cF\big(r-1,-(r-3)\big)$. But then $2-r \leq a < 3-2r$ by induction, a contradiction.
    
For each $(a,a+r-1) \in \cF(r-1,r-1) \setminus \Qu$, we have 
$L''(a,a+r-1) = (a,a-r+2)$, and $a-r+2-a=-(r-2)$. Thus, by \cref{lemma3}, 
it follows that 
\begin{align*}
   L''(x,x+r-1) \in \cF(r,-(r-2)) \setminus \Qu, 
\end{align*}
and since $L''$ preserves the first coordinate, we obtain
\begin{align*}
  \Sm\big(r-1,\z(-(r-2))\big) 
  = \Sm\big(r-1,r-1\big) \subseteq \Sm\big(r,-(r-2)\big).  
\end{align*}

If there was a point $(a,a-r+2) \in \cF(r,-(r-2)) \setminus \Qu$ with $a \not\in [1-r,-1] \cap \ZZ$, \cref{lemma3} yields $a < 1-r$. Then, by \cref{lemma3} and induction, there exists $a' \in \ZZ$ such that $(a',a-r+2) \in \partial \scBpc(O,r-1)$ with $L'(a',a-r+2)=(a,a-r+2)$, or there exists $b' \in \ZZ$ such that $(a,b') \in \partial \scBpc(O,r-1)$ with  $L''(a,b') = (a,a-r+2)$. 

In the former case, we have $a'=a-2r+5$, and hence $(a',a-r+2) \in \cF(r-1,r-3)$.
But then we obtain $3-2(r-1) \leq a < 3-2r$, by induction, a contradiction. 

In the latter case, we have $b' = a+r-1$, 
and hence $(a,b') \in \cF(r-1,r-1)$, which contradicts \cref{lemma3}. 

Now, fix $c \in \ZZ$ with $|c| < r - 2$ and $c \equiv r \pmod 2$. It is sufficient to present the proof in the case $c\le 0$, 
as the case where $c>0$ can be treated in an analogous manner or, differently, the result can be obtained by applying symmetry.
Thus, in the following we assume that~$c \leq 0$. 
For each point
$\big(a,a+\z(c)\big) \in \cF\big(r-1,\z(c)\big) \setminus \Qu$, we have 
\begin{align*}
   L''\big(a,a+\z(c)\big) = (a,a-\z(c)+1) = (a,a+c).
\end{align*}
We claim that this implies 
\begin{align*}
  L''\big(a,a+\z(c)\big) \in \cF(r,c) \setminus \Qu.
\end{align*}
It suffices to show that $L''(x,x+\z(c)) \not\in \cF(r-2,c) \setminus \Qu$. 
Since $|c| = r - 2k = (r-2)-2(k-1)$, by \cref{characterization} we obtain:
\begin{align*}
  \Sm(r-2,c) 
  = \begin{cases}
  \big([0,r-2-k] \cap \ZZ\big) + c(k-1) - \binom{k}{2}, 
       &\text{ if } k \equiv 1 \pmod 2;\\[8pt]
  \big([k+1-r,-1] \cap \ZZ\big) + c(k-2) - \binom{k-1}{2}, 
       &\text{ if } k \equiv 0 \pmod 2.
    \end{cases}
\end{align*}
Since $|\z(c)| = |c| + 1 = r - 2k + 1 = (n-1)-2(k-1)$ and $\z(c) = 1-c$, \cref{characterization} also yields:
\begin{align*}
  \Sm(r-1,\z(c)) 
  = \begin{cases}
    \big([k-r,-1] \cap \ZZ\big)+(c-1)(k-1)-\binom{k-1}{2}, 
    &\text{ if } k \equiv 1 \pmod 2;\\[8pt]
    \big([0,r-1-k] \cap \ZZ\big)+(c-1)k - \binom{k}{2},
    &\text{ if } k \equiv 0 \pmod 2.
    \end{cases}
\end{align*}
Observing that
\begin{align*}
  -1 + (c-1)(k-1) - \binom{k-1}{2} 
  - \left(0 + c(k-1) - \binom{k}{2}\right) 
     = -1 < 0,
\end{align*}
    and
\begin{align*}
  r-1-k+(c-1)k-\binom{k}{2} 
  - \left(k+1-r+c(k-2) - \binom{k-1}{2}\right) 
  = -1 < 0,
\end{align*}
the claim follows. 

Thus, because $L''$ preserves the first coordinate, 
$\Sm\big(r-1,\z(c)\big) \subseteq \Sm(r,c)$. 
Note that more generally, for $1 \leq j \leq k-1$, 
\cref{characterization} yields:
\begin{align*}
  \Sm(r-2j,c) 
  = \begin{cases}
    \big[k+j-r,-1\big]+c(k-j-1)-\binom[0.81]{k-j}{2},  &\text{ if } j \not\equiv k \pmod 2;\\[8pt]
    \big[0,r-j-k-1\big]+c(k-j)-\binom[0.81]{k-j+1}{2}, &\text{ if } j \equiv k \pmod 2,
  \end{cases}
\end{align*}
since $|c| = r-2k = (n-2j)-2(k-j)$. 

Also note that for every integer $j$ with $1 \leq j \leq k - 2$, we have 
\begin{align*}
  r-j-k-1+c(k-j)-&\binom[0.81]{k-j+1}{2}+1 \\
  &= k+(j+1)-r+c(k-(j+1)-1)-\binom[0.81]{k-(j+1)}{2},
\end{align*}
and
\begin{align*}
  -1+c(k-j-1)-\binom[0.81]{k-j}{2}+1 
  &= c(k-(j+1)) - \binom[0.81]{k-(j+1)+1}{2}. 
\end{align*}
Thus, if $k \equiv 1\pmod 2$, we obtain
\begin{align*}
   \left[c(k-1)-\binom{k}{2},-c-1\right] \cap \ZZ 
   &= \left(\left[c(k-1)-\binom{k}{2},-1\right] \cap \ZZ\right) \sqcup \left([0,-c-1] \cap \ZZ\right) \\
   &= \bigsqcup_{j=1}^{k} \Sm(r-2j,c),
\end{align*}
and, if $k \equiv 0 \pmod 2$, we obtain
\begin{align*}
        \left[k+1-r+c(k-2)-\binom{k-1}{2},-c-1\right] \cap \ZZ = \bigsqcup_{j=1}^{k} \Sm(r-2j,c). 
\end{align*}

Next, we need to distinguish two cases, depending on the parity of $k$.
Since the analysis with the appropriate modifications is similar in both cases, 
for the rest of the proof, we will assume that $k \equiv 0 \pmod 2$.

If there was a point $(a,a+c) \in \cF(r,c) \setminus \Qu$ 
such that $(a,a+c) \not\in \Sm\big(r-1,\z(c)\big)$, 
then $a < (c-1)k-\binom{k}{2}$, by the above.
Noting that $|c-1| = r-2k+1 = (n-1)-2(k-1)$, \cref{characterization} yields
\begin{align*}
  \Sm(r-1,c-1) = \big([k-r,-1]\cap \ZZ\big) + (c-1)(k-2) - \binom[0.81]{k-1}{2}.
\end{align*}
Since $r-2k = |c| \geq 0$ implies $(c-1)k-\binom{k}{2}+1 \leq k-r+(c-1)(k-2)-\binom{k-1}{2}$,
it follows that $(a,a+c-1)\not\in \cF(r-1,c-1) \setminus \Qu$, and 
$(a+1,a+c-1) \not\in \cF(r-1,c-1) \setminus \Qu$.

Likewise, if $c \leq -1$, then $|c+1| = r-2k-1 = (r-1)-2k$. 
Therefore
\begin{align*}
  \Sm(r-1,c+1) = \big([0,r-k-2] \cap \ZZ\big) + (c+1)k - \binom[0.81]{k+1}{2},
\end{align*} 
by \cref{characterization}. Since $k \geq 1$ implies 
$(c-1)k-\binom{k}{2} + 1 \leq (c+1)k-\binom{k+1}{2}$, 
it follows that $(a-1,a+c), (a,a+c+1) \not\in \cF(r-1,c+1) \setminus \Qu$. 

If $c = 0$, we obtain $|c+1| = r-2k+1 = (r-1)-2(k-1)$, and hence 
\begin{align*}
 \Sm(r-1,c+1) = \big([0,r-1-k] \cap \ZZ\big) - (c+1)k - \binom[0.81]{k}{2},
\end{align*} 
once again by \cref{characterization}. 
Since $a < (c-1)k - \binom{k}{2} = -\binom{k+1}{2} = -(c+1)k - \binom{k}{2}$,
there exists $a^{*} \in \ZZ$ such that 
$(a^{*},a+c) \in \partial \scBpc\big(O,r-1\big)$,
and $L'(a^{*},a+c) = (a,a+c)$, 
or there exists $b^{*} \in \ZZ$ such that 
$(a,b^{*}) \in \partial \scBpc\big(O,r-1\big)$, 
and $L''(a,b^{*}) = (a,a+c)$. 

In the first case, we have $a^{*} = a+2c+1$, and $a+c-a^{*} = -c-1$. 

If $a^{*} \geq 0$, then by \cref{lemma3}, $r-1 = -c-1$.
Therefore, $r = |c|$, contradicting our assumption that $|c| \leq r-2$. 

If $a^{*} < 0$, we obtain $a^{*} \in \Sm(r-1,-c-1)$ yet by \cref{characterization}, we have:
\begin{equation*}
  \Sm(r-1,-c-1) 
  = \begin{cases}
  \big([0,r-k-2] \cap \ZZ\big)-(c+1)k-\binom[0.81]{k+1}{2},
           &\text{if } c \leq -1;\\[8pt]
  \big([0,r-1-k] \cap \ZZ\big)+(c+1)k-\binom[0.81]{k}{2},
        &\text{if } c = 0.
  \end{cases}   
\end{equation*}

If $c \leq -1$, then 
$k \geq 1$ implies $c(k+2)+1 \leq -(k+2)+1 \leq 0 \leq -(c+1)k$ 
and hence $a^{*} < (c-1)k - \binom{k}{2} + 2c + 1 \leq -(c+1)k-\binom{k+1}{2}$. 

If $c = 0$, then $k \geq 1$ now implies
$a^{*} < (c-1)k - \binom{k}{2} + 2c + 1 \leq k - \binom{k}{2} = (c+1)k - \binom{k}{2}$. 

In the second case, we have $b^{*} = a-c+1$. 
Then $a \in \Sm(r-1,-c+1)$. But, by \cref{characterization},
we find that
\begin{equation*}
  \Sm(r-1,-c+1) = \big([k-r,-1] \cap \ZZ\big) 
                    + (c-1)(k-1) - \binom[0.81]{k-1}{2},
\end{equation*}
and $(c-1)k-\binom{k}{2} = k-r+(c-1)(k-1)-\binom{k-1}{2}$. 

This completes the proof of Lemma~\ref{lemma7}.
\end{proof}

By \cref{lemma7}, \cref{characterization} yields parts \ref{Theorem0:II}\ref{Theorem0:IIb} and \ref{Theorem0:IIc} of \cref{Theorem0}. 

\section{Proof of Theorem~\ref{Theorem1}}
Recalling the notation $O=(0,0)$ for the origin, the boundaries of the smallest balls are:
$\partial \scBpc(O,0) = \{O\}$, and 
$\partial \scBpc(O,1) = \{(\pm 1, 0), (0, \pm 1)\}$,
as seen in Figure~\ref{FigureSevenBalls}.
Then formula~\eqref{eqTheorem1a} is verified for~$r=0$ and $r=1$, because
$5/2 - 1 + 5/2 = 4$.
We may therefore assume $r \geq 2$. 

For every $c \in \ZZ$,  writing $|c| = r-2k$, the following statements hold:
\begin{enumerate}
\setlength{\itemsep}{0pt}
    \setlength{\parskip}{8pt}
    \setlength{\parsep}{0pt} 
    \item 
$|\cF(r,c)| = 2r - 2k$, if $c \equiv r \pmod 2$ and $|c| \leq r - 2$,  
by formula~\eqref{eqChar2}; 
    \item 
$|\cF(r,c)| = 0$, if $c \not\equiv r \pmod 2$ and $|c| < r-2$, by \cref{lemma2};
    \item 
$|\cF(r,\pm r))| = \binom{r}{2} + 1 + r = \binom{r+1}{2} + 1$, by formula~\eqref{eqChar3}.
\end{enumerate}
Consequently, we may determine the number of points on the boundary 
$\partial \scBpc(O,r)$.
If~$r>0$ is even, we have:
\begin{align*}
  \#\partial \scBpc(O,r) 
    &= 2\left(\binom[0.81]{r+1}{2} + 1\right) + r 
    + 2\sum_{k=1}^{r/2-1}2r - 2k\\
    &= r^{2}+2r+2 + 4r\left(\sdfrac{r}{2}-1\right) 
    - 4\binom[0.81]{r/2}{2}\\
    &= \sdfrac{5r^{2}}{2}-r+2.
\end{align*}
Likewise, if $r$ is odd, we have:
\begin{align*}
  \#\partial \scBpc(O,r) &= 2\left(\binom[0.81]{r+1}{2} + 1\right) + 2\sum_{k=1}^{(r+1)/2-1}2r - 2k\\
  &= r^{2}+r+2 + 4r\left(\sdfrac{r+1}{2}-1\right) 
     - 4\binom[0.81]{(r+1)/2}{2}\\
  &= \sdfrac{5r^{2}}{2}-r+\sdfrac{5}{2}.
\end{align*}
Taking the sum over the interval $[0,R] \cap \ZZ$ of the expressions ~\eqref{eqTheorem1a} obtained for $\#\partial \scBpc(0,r)$, where $0 \leq r \leq R$, we obtain the area of the 
the ball $\scBpc(O,R)$:
\begin{align*}
  \sum_{r=0}^R\#\partial\scBpc(O,r)
  &= \#\partial\scBpc(O,0)
  + \sum_{\substack{r=1\\r \text{ odd}}}^R
      \#\partial\scBpc(O,r)
  + \sum_{\substack{r=2\\r \text{ even}}}^R
      \#\partial\scBpc(O,r)\\
  &=  \sum_{\substack{r=1\\r \text{ odd}}}^R
      \bigg(\sdfrac{5r^{2}}{2}-r+\sdfrac{5}{2}\bigg)
   + \sum_{\substack{r=2\\r \text{ even}}}^R
      \#\partial\scBpc(O,r)  
   \bigg( \sdfrac{5r^{2}}{2}-r+2\bigg).
\end{align*}
It then follows that:
\begin{align*}
  \#\scBpc(O,R) 
  &= 1 + \sum_{r=1}^R \bigg( \sdfrac{5r^{2}}{2}-r+2\bigg)
    +\sdfrac 12\sum_{\substack{r=1\\r \text{ odd}}}^R 1\\
  &= 1 + \sdfrac{5R(R+1)(R+2)}{12}  
   - \sdfrac{R(R+1)}{2}  +2R 
   +\sdfrac 12 \left\lceil\sdfrac{R}{2}\right\rceil\\
  &=  \sdfrac{1}{12}\big(10R^3 + 9R^2 + 23R\big) 
         + \sdfrac{1}{2}\left\lceil\sdfrac{R}{2}\right\rceil + 1.
\end{align*}  
 
This concludes the proof of \cref{Theorem1}.

\bigskip
\noindent
\textbf{Acknowledgements.}
The authors acknowledge Omer Cantor who sent them his proof of parts~\ref{Problem2} and~\ref{Problem3}
of Problem~\ref{Problem}
after the first draft of the manuscript became \mbox{public}.



\end{document}